\newtheorem{teo}{Theorem}
\newtheorem{lem}{Lemma}
\newtheorem{prop}{Proposition}
\newtheorem{defin}{Definition}
\newtheorem*{mainteo}{The Slab Theorem}
\newtheorem*{mainlem}{Dragging Lemma}
\newcommand{\dist}{\operatorname{dist}}
\newcommand{\cyl}{\operatorname{Cyl}}
\newcommand{\psl}{\operatorname{PSL}}
\newcommand{\di}{\operatorname{div}}
\newcommand{\id}{\operatorname{Id}}
\title{The Slab Theorem for Minimal Surfaces in $\mathbb{E}(-1,\tau)$}
\author{Vanderson Lima\thanks{The author was supported by CNPq-Brazil.}}
\date{}
\begin{document}

\maketitle

\begin{abstract}
\noindent Unlike $\mathbb{R}^{3}$, the homogeneous spaces $\mathbb{E}(-1,\tau)$ have a great variety of entire vertical minimal graphs. In this paper we explore conditions which guarantees that a minimal surface in $\mathbb{E}(-1,\tau)$ is such a graph. More specifically: we introduce the definition of a generalized slab in $\mathbb{E}(-1,\tau)$ and prove that a properly immersed minimal surface of finite topology inside such a slab region has multi-graph ends. Moreover, when the surface is embedded, the ends are graphs. When the surface is embedded and simply connected, it is an entire graph.
\end{abstract}

\providecommand{\abs}[1]{\lvert#1\rvert}

\linespread{1} 

\section{Introduction}

An important problem in the theory of minimal surfaces is to classify such surfaces by their topological type. The simplest topological examples are properly embedded surfaces that are simply connected or homeomorphic to the annulus $\mathbb{S}^{1}\times\mathbb{R}$. This problem was extensively studied when the ambient space is $\mathbb{R}^{3}$. One of the first unicity results in this setting is the classical theorem of Bernstein, which states that a minimal graph defined over the plane $\mathbb{R}^{2}$ ({\it an entire graph}) is a plane. Generalizations of this theorem and other results were obtained over the years until the following theorem was proved by Meeks and Rosenberg:
\begin{teo}[\cite{M.R}] A properly embedded simply connected minimal surface in $\mathbb{R}^{3}$ is either a plane or a helicoid.
\end{teo}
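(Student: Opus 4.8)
The strategy is to combine the Colding--Minicozzi structure theory for embedded minimal disks with a global Weierstrass-representation argument; one may assume $M$ is not a plane and must then show it is a helicoid. First I would appeal to the Colding--Minicozzi description of properly embedded minimal disks: after a suitable rescaling, $M$ subconverges to a minimal lamination of $\mathbb{R}^{3}$ by parallel planes whose singular set is a line orthogonal to them. From this one extracts that $M$ has exactly one end and that, after rotating so these planes are horizontal, there is a solid vertical cylinder outside of which $M$ consists of two multivalued graphs spiralling off to infinity, so that $M$ is asymptotic to a helicoid. In particular $M$ has quadratic area growth in extrinsic balls; together with having one end this makes $M$ recurrent for Brownian motion, hence conformally parabolic, and since $M$ is simply connected it is conformally $\mathbb{C}$.

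\textbf{Weierstrass data and the height differential.} On $M \cong \mathbb{C}$ I would write the Weierstrass pair $(g,dh)$, where $g$ is the stereographically projected Gauss map, $dh = \partial x_{3}$ the height differential, and the induced metric is $ds^{2} = \tfrac14(|g|+|g|^{-1})^{2}|dh|^{2}$. The essential point is that $x_{3}\colon M \to \mathbb{R}$ has no critical points: a critical point of $x_{3}$ is a zero of $dh$, i.e.\ a point with horizontal tangent plane, and I would rule these out using embeddedness together with the multigraph description above (on each spiralling end the height is strictly monotone in the angular variable, and a maximum-principle argument handles the remaining compact part). Consequently $dh$ is a nowhere-vanishing holomorphic $1$-form on $\mathbb{C}$, so after a dilation and rotation $dh = dz$ and $x_{3} = \operatorname{Re} z$; moreover $g$ then omits the values $0$ and $\infty$, so $g = e^{\phi}$ for some entire function $\phi$.

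\textbf{The Gauss map is the exponential of an affine map.} It remains to prove $\phi(z) = az + b$ with $a \ne 0$, since the Weierstrass data $(e^{az+b},\,dz)$ on $\mathbb{C}$ is exactly that of a rigidly moved and scaled helicoid. Each level set $\{x_{3}=c\}$ is a line in the $z$-plane mapping to a complete embedded curve in $M$, and these curves are pairwise disjoint in $\mathbb{R}^{3}$ and asymptotic to straight lines. From the asymptotic helicoidal behaviour one reads off that $\phi$ has at most linear growth and that $\operatorname{Im}\phi$ is, along each such line, an affine function of the height; a Phragm\'en--Lindel\"of / Ahlfors-type argument on $\mathbb{C}$ then forces $\phi$ to be a polynomial of degree one, and one concludes that $M$ is a helicoid.

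\textbf{Main obstacle.} The heart of the matter, and the step demanding the deepest input, is establishing the global conformal type $M \cong \mathbb{C}$ together with the nonvanishing of $dh$. This is exactly where properness and embeddedness are indispensable: there are complete simply connected minimal surfaces that are neither planes nor helicoids, and only the full Colding--Minicozzi structure theory excludes them here. Once that is in place, the remaining complex-analytic step is comparatively soft.
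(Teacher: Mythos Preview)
The paper does not prove this theorem at all: it is quoted in the Introduction purely as background, with the citation \cite{M.R} to Meeks--Rosenberg, and no proof or sketch is given anywhere in the text. So there is no ``paper's own proof'' to compare your proposal against.

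That said, your sketch is a fair outline of the actual Meeks--Rosenberg strategy from \cite{M.R}: Colding--Minicozzi lamination theory to get the asymptotic helicoidal structure and one end, parabolicity to pin down the conformal type as $\mathbb{C}$, nonvanishing of $dh$ so that $g=e^{\phi}$ for an entire $\phi$, and then an argument forcing $\phi$ to be affine. The last step in your sketch is the softest as written; in the original the linearity of $\phi$ is extracted from a more careful analysis of the asymptotic geometry (controlling the growth of $\log g$ via the helicoid-like structure of the ends) rather than a bare Phragm\'en--Lindel\"of statement, so if you ever need to make this rigorous that is where the work lies. But for the purposes of this paper none of this is needed: the result is simply cited.
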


In the case of an annulus we have the following unicity theorem due to Collin:
\begin{teo}[\cite{C}]
A proper minimal embedding of an annulus $\mathbb{S}^{1}\times\mathbb{R}$, in $\mathbb{R}^{3}$ is a catenoid.
\end{teo}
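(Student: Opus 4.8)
\emph{Strategy.} The plan is to reduce the theorem to two statements: (i) a properly embedded minimal annulus in $\mathbb{R}^{3}$ has finite total curvature, and (ii) a complete minimal surface of finite total curvature with two embedded ends is a catenoid (Schoen's uniqueness theorem). Since (ii) is classical, essentially all the work lies in (i), which is the theorem of Collin; I indicate how I would organize it. So let $M\subset\mathbb{R}^{3}$ be a properly embedded minimal surface homeomorphic to $\mathbb{S}^{1}\times\mathbb{R}$; it has exactly two ends $E^{+}$ and $E^{-}$, and by the structure theory of Meeks--Rosenberg each is conformally a punctured disk.

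\emph{Reducing to the multigraph structure of the ends.} The first and hardest step is to show that, after a rotation, each end is asymptotically horizontal: the Gauss map along $E^{\pm}$ converges to $\pm e_{3}$, so that outside a large ball $E^{\pm}$ is a minimal multigraph over the exterior $\Omega=\mathbb{R}^{2}\setminus D_{R}$ of a disk in the plane $\{x_{3}=0\}$, with $E^{+}$ lying asymptotically above $E^{-}$. The ingredients I would use are: the conformal-type and ``parallel ends'' structure of Meeks--Rosenberg together with the halfspace theorem (Hoffman--Meeks) and the ordering theorem for the ends (Frohman--Meeks); the monotonicity formula, to control the area growth of the end; and the maximum principle against families of catenoids, to pin down the asymptotic normal and keep the end from spiralling. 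This is exactly the place where properness alone is insufficient and embeddedness is used in an essential way.

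\emph{From the multigraph structure to finite total curvature.} Once $E^{\pm}$ is a multigraph over $\Omega$, the projection $E^{\pm}\to\Omega$ is a connected covering of the annulus $\Omega\simeq\mathbb{S}^{1}\times[0,\infty)$; since $E^{\pm}$ is itself an annulus while the only infinite-sheeted connected cover of an annulus is the half-plane $\mathbb{R}\times[0,\infty)$, the number of sheets $k^{\pm}$ is finite. Embeddedness then forces $k^{\pm}=1$: writing the multigraph as a graph $x_{3}=u(r,\theta)$ over the $k^{\pm}$-fold cover of $\Omega$, embeddedness makes $u(r,\theta+2\pi)-u(r,\theta)$ of one sign for all $(r,\theta)$, so iterating $k^{\pm}$ times contradicts $u(r,\theta+2\pi k^{\pm})=u(r,\theta)$, which must hold because going $k^{\pm}$ times around $\Omega$ means going once around $E^{\pm}$. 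Hence each end is a genuine minimal graph over $\Omega$, and by the asymptotic analysis of minimal graphs on exterior planar domains (R.\ Schoen, L.\ Simon) it admits an expansion $u^{\pm}(x)=a^{\pm}\log|x|+b^{\pm}+O(|x|^{-1})$; in particular $\int_{E^{\pm}}|K|\,dA<\infty$, so $M$ has finite total curvature.

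\emph{Conclusion.} Since $M$ is embedded its two ends are embedded, so by Schoen's theorem $M$ is a catenoid. (Equivalently: finite total curvature makes $M$ conformally $\mathbb{C}\setminus\{0\}$ with meromorphic Weierstrass data; two embedded ends on a genus-zero surface force the Gauss map to have degree one and the vertical flux to vanish, which pins down the catenoid data $g(z)=z$, $\omega=dz/z^{2}$.) The main obstacle, as indicated, is the second paragraph: establishing the asymptotically horizontal multigraph structure of the ends without presupposing finite total curvature. For merely immersed minimal annuli the conclusion is false in general (e.g.\ the surfaces with Weierstrass data $g(z)=z^{n}$, $\omega=dz/z^{2}$ for $n\geq 2$), which is why Nitsche's original conjecture carried the extra hypothesis that each horizontal slice be a star-shaped curve; removing it in the embedded case is the substance of Collin's argument.
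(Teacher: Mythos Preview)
The paper does not prove this statement at all: Theorem~2 is quoted in the introduction purely as background, with a bare citation to Collin \cite{C}, and no argument is given anywhere in the paper. So there is no ``paper's own proof'' to compare your proposal against; your write-up is a sketch of Collin's theorem itself, which lies entirely outside the scope of this paper.

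On the substance of your sketch: the overall architecture (show finite total curvature, then invoke Schoen's two-ended uniqueness theorem) is the correct high-level route, and you are right to flag the second paragraph as the crux. But be careful about circularity. You write that ``by the structure theory of Meeks--Rosenberg each [end] is conformally a punctured disk''; the parabolicity/conformal-type results you seem to have in mind for finite-topology ends in $\mathbb{R}^{3}$ (e.g.\ \cite{M.R}, \cite{B.B}) \emph{postdate} Collin and in the multi-ended case actually rest on his finite-total-curvature theorem. Likewise, the ordering theorem for ends and the annular-end theorem of Hoffman--Meeks are legitimate pre-Collin inputs, but they do not by themselves give you the asymptotically horizontal multigraph picture; Collin's actual argument is a delicate barrier construction (building, for each annular end trapped in a catenoidal region, a comparison family that forces sublinear height growth and hence planar or catenoidal asymptotics), not a soft consequence of the tools you list. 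Your third paragraph (finite sheets $\Rightarrow$ single sheet $\Rightarrow$ Schoen--Simon expansion $\Rightarrow$ finite total curvature) is fine once the multigraph structure is in hand, and the concluding appeal to Schoen is correct.
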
 

In general, it is known what is the geometric behavior of the ends of a properly embedded minimal surface with finite topology: a proper minimal embedding of $\mathbb{S}^{1}\times\mathbb{R}^{+}$, is asymptotic to an end of a plane, helicoid or catenoid, see \cite{B.B, M.R}. 

We remark that in the previous theorems we can replace {\it properly embedded}, by {\it complete and embedded}, since Colding and Minicozzi proved that a complete embedded minimal surface of finite topology in $\mathbb{R}^{3}$ is proper, \cite{C.M2}. In general this is not true, even in the case where the undelying ambient space is homogeneous, see \cite{Cos} and \cite{R.T} where examples of complete non-proper minimal embeddings of the disc are constructed in $\mathbb{H}^{3}$ and $\mathbb{H}^{2}\times\mathbb{R}$ respectively.

In this paper we are interested in a certain family of homogeneous $3$-manifolds. Consider the Homogeneous spaces $\mathbb{E}(-1,\tau)$. 
These manifolds are characterized by the following properties: there is a Riemannian submersion $$\Pi: \mathbb{E}(-1,\tau) \to \mathbb{H}^{2},$$ such that the fibers of $\Pi$ are the integral curves of a unit Killing vector field $\xi$, and the constant $\tau$ (called the bundle curvature) satisfies $$\nabla_{X} \xi = \tau X\wedge\xi,$$ for any vector field $X$.
For each $\tau$, $\mathbb{E}(-1,\tau)$ is diffeomorphic to $\mathbb{H}^{2}\times\mathbb{R}$, however it is not a Riemannian product for $\tau \neq 0$.  We can define the notion of {\it vertical graph} as the image of a smooth section $s: \Omega \subset \mathbb{H}^{2} \rightarrow \mathbb{E}(-1,\tau)$, for some domain $\Omega$. More generally, we define a {\it vertical multi-graph} as a surface transverse to the vertical Killing field $\xi$. 

A first important result is that Bernstein's theorem is not valid for vertical graphs in these ambient manifolds: Nelli and Rosenberg \cite{N.R1, N.R2} proved that given any continuous rectifiable curve $\Gamma \subset \partial_{\infty}\mathbb{H}^{2}\times\mathbb{R}$, $\Gamma$ a graph over $\partial_{\infty}\mathbb{H}^{2}$, there is an entire minimal graph asymptotic to $\Gamma$ at infinity. This result was extended to the spaces $\mathbb{E}(-1,\tau)$ by Folha and Pe\~ nafiel, \cite{F.P}. 

There are many other examples of properly embedded simply connected minimal surfaces. The so called {\it Scherk surfaces}: if $\Gamma$ is an ideal polygon of $\mathbb{H}^{2}$, there are necessary and sufficient conditions on $\Gamma$ which ensure the existence of a minimal graph over its interior, taking values plus and minus infinity on alternate sides of $\Gamma$, \cite{C.R, Me}, so there are complete minimal graphs that are not entire. Finally, we mention the existence of families of minimal embeddings of the plane that are not graphs (we describe them in section 2). 

Due to this great variety of examples it seems to be very difficult to obtain some kind of classification. However, this discussion gives rise to a related and important problem: to find conditions that require a properly embedded minimal plane in $\mathbb{E}(-1,\tau)$ to be an entire graph. Collin, Hauswirth, Rosenberg proved that in $\mathbb{H}^{2}\times\mathbb{R}$ a sufficient condition is the surface be inside a slab of height less than $\pi$. In fact, they proved this condition is sufficient to assure that a properly immersed minimal surface of finite topology has multi-graph ends:

\begin{teo}[\cite{C.H.R2}]\label{CHR}
Let $\Sigma$ be a properly immersed minimal surface in $\mathbb{H}^{2}\times\mathbb{R}$ of finite topology, which is inside a slab of height $\pi - \epsilon$, for some $\epsilon > 0$. Then each of its ends is a multi-graph. Moreover: 
\begin{itemize}
\item[a)] If $\Sigma$ is embedded, then each of its ends contains an annulus that is a graph over the complement of a disc in $\mathbb{H}^{2}$;
\item[b)] If $\Sigma$ is embedded and has only one end, then it is an entire graph.
\end{itemize}
\end{teo}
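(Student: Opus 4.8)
The plan is to reduce the theorem to the analysis of a single annular end of $\Sigma$, and to play two elementary facts off against a family of barrier surfaces whose use is exactly what pins the threshold at $\pi$. By the finite topology assumption there is a compact set $K\subset\Sigma$ such that $\Sigma\setminus K$ is a finite disjoint union of annular ends; fix one of them, $E$, a properly immersed minimal annulus with compact boundary lying in a slab $\mathbb{H}^{2}\times[a,b]$ of height $b-a=\pi-\epsilon<\pi$. The two facts are: (i) the height function $h$ on $E$ (the restriction of the vertical coordinate) is harmonic, since the vertical Killing field $\partial_{t}$ is parallel in the product $\mathbb{H}^{2}\times\mathbb{R}$, so that $\Delta_{E}h=\langle\vec H,\partial_{t}\rangle=0$; in particular $h$ is a bounded harmonic function whose oscillation is less than $\pi$; and (ii) because $\Sigma$ is proper, $E$ is non-compact, and the slab has bounded height, the horizontal projection $\Pi(E)\subset\mathbb{H}^{2}$ leaves every compact set, so the end genuinely escapes toward $\partial_{\infty}\mathbb{H}^{2}$.

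The core step is to show that the angle function $\nu=\langle N,\partial_{t}\rangle$ satisfies $|\nu|\to1$ along $E$; equivalently, that $E$ has no asymptotically vertical tangent planes. Suppose not, so that there are points $p_{n}\to\infty$ in $E$ whose tangent planes stay a fixed positive angle away from horizontal. Here enters a family of barriers whose usefulness hinges on $b-a<\pi$: the complete minimal surfaces invariant under a one-parameter group of hyperbolic translations of $\mathbb{H}^{2}$, which are vertical graphs over a geodesic half-plane, vanish on the bounding geodesic, and whose total heights fill the whole interval $(0,\pi)$. One fixes such a barrier $S$ of total height strictly between $b-a$ and $\pi$; using the homogeneity of $\mathbb{H}^{2}$, vertical translations, and the fact that $\Pi(p_{n})\to\partial_{\infty}\mathbb{H}^{2}$, one positions $S$ disjoint from $E$ with $E$ locally on one side and then slides it toward $E$ by hyperbolic and vertical translations. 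Since $S$ does not fit in the slab $\mathbb{H}^{2}\times[a,b]$ while $E$ does, and since the persistent tilt of $E$ at $p_{n}$ together with $p_{n}$ being far from $\partial E$ leaves room for $S$ to advance, one reaches a first interior point of tangency of $S$ with $E$, with $E$ on one side; the maximum principle then forces $E\subset S$, which is absurd. Hence $|\nu|\to1$: outside a compact piece $E$ is transverse to $\partial_{t}$, so $\Pi|_{E}$ is a local diffeomorphism onto a neighborhood of infinity in $\mathbb{H}^{2}$ — that is, $E$ is a multi-graph, which is the first assertion. Moreover $\nu$ is now a nowhere-vanishing Jacobi field on $E$, so $E$ is stable and the curvature estimate for stable minimal surfaces gives $|A_{E}|\to0$ along the end, a fact used below. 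Making the sliding argument precise — quantifying that a fixed tilt of $E$ at a point escaping to $\partial_{\infty}\mathbb{H}^{2}$ really leaves room for a first contact of the barrier — is the step I expect to be the main obstacle.

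For (a), assume $\Sigma$ embedded. Then, outside a compact piece, $\Pi|_{E}$ is a proper local diffeomorphism, and a short monodromy argument identifies its image with the complement of a disc, so $\Pi|_{E}$ is a finite-sheeted covering of one annulus by another. If it had more than one sheet, the sheets would be pairwise disjoint by embeddedness and linearly ordered by height; sliding the topmost sheet downward against the one below it — this is where the Dragging Lemma enters — would produce an interior tangency of two minimal graphs and force them to coincide by the maximum principle, a contradiction. Hence the covering has a single sheet, so $E$ contains an annulus that is a graph over the complement of a disc in $\mathbb{H}^{2}$, which is (a).

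For (b), assume in addition that $\Sigma$ has a single end, so that $\Sigma$ is a properly embedded plane whose end is, far out, a vertical graph. It remains to propagate the graph property inward, that is, to show that $\nu$ vanishes nowhere on $\Sigma$; once this is known, $\Pi|_{\Sigma}$ is a proper local diffeomorphism of a plane onto $\mathbb{H}^{2}$, hence a diffeomorphism, and $\Sigma$ is an entire graph. To exclude interior zeros of $\nu$ one argues by the maximum principle against a foliation of $\mathbb{H}^{2}\times\mathbb{R}$ by vertical minimal graphs, for instance the horizontal slices $\mathbb{H}^{2}\times\{t\}$ or the entire minimal graphs constructed by Nelli and Rosenberg: lowering the leaves from above $\Sigma$, at the first point of contact tangency is forbidden unless $\Sigma$ is itself a leaf, which it is not, so $\Sigma$ stays transverse to $\partial_{t}$ throughout the sweep. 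Equivalently, a zero of the Jacobi field $\nu$ would force its nodal set to separate $\Sigma$ and so contradict the strong maximum principle once $\nu$ is known to have a sign near the end. This propagation, together with the sliding argument of the second paragraph, carries the weight of the proof.
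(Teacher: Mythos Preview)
Your self-identified ``main obstacle'' is a genuine gap, and the sliding-to-first-contact scheme you outline does not survive it. The end $E$ is non-compact and only immersed, so there is no reason a one-parameter family of translates of your barrier $S$ should achieve a \emph{first} interior tangency with $E$ lying locally on one side; $E$ can intersect every member of the family in a complicated, non-compact set, and the maximum principle never gets a foothold. The paper (following \cite{C.H.R2}) bypasses this completely and uses the barrier in a different way. One does not slide $S$ toward $\Sigma$; one places the translation-invariant surface $M$ \emph{tangent} to $\Sigma$ at the putative vertical point $p$. Then $\Sigma\cap M$ has at least four arcs through $p$, so on one side $M(+)$ of $M$ there are at least two local pieces of $\Sigma$. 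A foliation of the slab by translates $M_\lambda$ of $M$, together with the maximum principle applied to compact subsurfaces of $\Sigma$, forces these two pieces to lie in \emph{distinct} connected components of $\Sigma\cap M(+)$, and each component is shown to be non-compact. Now the Dragging Lemma enters, and here rather than in part (a): through any point of $M(+)$ far from $M$ one passes a compact catenoid with boundary outside the slab, and moving this catenoid inside $M(+)$ back to a fixed ball $B$ drags each of the two components into a fixed compact $\mathcal{K}$. This contradicts their being in distinct components of $\Sigma\cap M(+)$ --- unless $M$ itself meets $\mathcal{K}$. A separate distance estimate then shows that if $p$ is far enough from $\mathcal{K}$, the barrier $M$ tangent at $p$ stays disjoint from $\mathcal{K}$. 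The conclusion is only that $\nu\neq 0$ outside a compact (not your stronger $|\nu|\to 1$), which is exactly what ``multi-graph'' means.

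For (a), the paper does not compare sheets by sliding one against another. Once $\nu\neq 0$ on the end, stability gives uniform curvature bounds; one shows the projection $\Pi$ from a connected piece $\mathcal{E}_r$ outside a large vertical cylinder onto $\mathbb{H}^2\setminus B_r$ is surjective (a blow-up argument: failure would produce a limit piece of a vertical plane, hence unbounded height), and since $\Pi$ is then a proper local diffeomorphism it is a covering whose degree is read off from $\mathcal{E}_r\cap\mathrm{Cyl}(r)$ being a single Jordan curve. For (b) the paper does not sweep by horizontal slices either; it shows directly that the compact part bounded by the graphical Jordan curve $\Sigma\cap\mathrm{Cyl}(r)$ is itself a graph over the closed disc, via the argument in \cite{Y}. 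Your foliation sweep for (b) has the same non-compactness problem as your core step: lowering slices from above need not produce a first contact with an unbounded $\Sigma$.
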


The celebrated Half-Space Theorem of Hoffman and Meeks states that a properly immersed minimal surface in $\mathbb{R}^{3}$ which is above a plane is a parallel plane, \cite{H.M}. More generally the work of Colding and Minicozzi gave restrictions to the geometry of minimal surfaces inside slabs of $\mathbb{R}^{3}$, \cite{C.M}. So this inspires the slab condition of this theorem. The number $\pi$ is intrinsically related to the existence of some minimal surfaces in $\mathbb{H}^{2}\times\mathbb{R}$, for example vertical catenoids. Actually, there is an example of a properly embedded simply connected minimal surface inside a slab of height $\pi$, which is not a graph, so the number $\pi$ is optimal for this theorem. Also, the authors constructed an example of a properly immersed simply connected minimal surface which is inside a slab of height less than $\pi$, but is not a graph. Thus the condition of being embedded is necessary for the last conclusion.

However, there are entire minimal graphs that are not inside any slab of height less than $\pi$. We may ask: What are the regions of $\mathbb{H}^{2}\times\mathbb{R}$ for which theorem 3 is true? Also, one can ask if a theorem 3 is valid in the spaces $\mathbb{E}(-1,\tau)$, $\tau \neq 0$. The key properties of a slab used in the proof of theorem 3 are the fact that for each point $p$ inside the slab we can place a compact vertical rotational catenoid containing $p$, with boundary outside the slab, and the fact that the height function of any surface inside the slab is bounded. Inspired by this we introduce the following definition:

\begin{defin}
Consider the half-space model or the cylinder model of $\mathbb{E}(-1,\tau)$ (see section 2 and remark \ref{rem}). We say that a region $\mathcal{R}$ of $\mathbb{E}(-1,\tau)$ is a generalized slab if the following conditions are satisfied: 
\begin{enumerate}
\item $\mathcal{R}$ is a domain bounded by two disjoint entire vertical graphs $S_1$ and $S_2$ with bounded height (in relation to $\mathbb{H}^{2}\times\{0\}$) and such that the tangent planes of $S_1$ and $S_2$ are bounded away from the vertical, i.e, $\langle N_{i},\xi\rangle \geq c > 0$, where $N_{i}$ is the unit normal of $S_{i}$ and $c$ is a constant;
\item There is a $C^{1}$ map $\Psi:\mathcal{R}\times\bigl(\mathbb{S}^{1}\times [-1,1]\bigr) \to \mathbb{E}(-1,\tau)$, such that, for each $p \in \mathcal{R}$ we have that $\mathcal{C}(p) := \Psi\bigl(p,\mathbb{S}^{1}\times [-1,1]\bigr)$ is a minimal annulus containing the point $p$ and its two boundary curves are disjoint from $\mathcal{R}$, one above $\mathcal{R}$ and one below $\mathcal{R}$.  Moreover, any two annuli of the family $\{\mathcal{C}(p); p \in \mathcal{R}\}$ are isometric to each other.
\end{enumerate}
\end{defin}

\noindent
{\bf Remark 1:} A priori the definition of the height function in $\mathbb{E}(-1,\tau)$ depends on the model one considers. However, as proposition \ref{isometry} shows, there is an isometry between the half-space and the cylinder models which relates the height functions, showing that it differs by a bounded function. Therefore property $1$ in the above definition does not depend of the choice of one of these two models.\label{rem}
\\

In this setting we prove the following:
\begin{mainteo}
Let $\Sigma$ be a properly immersed minimal surface of finite topology in $\mathbb{E}(-1,\tau)$, $\tau \in \mathbb{R}$, inside of a generalized slab $\mathcal{R}$. Then each of its ends is a multi-graph. Moreover:
\begin{itemize}
\item[a)] If $\Sigma$ is embedded, then each of its ends contains an annulus that is a graph over the complement of a disc in $\mathbb{H}^{2}$;
\item[b)] If $\Sigma$ is embedded and has only one end, then it is an entire graph.
\end{itemize}
\end{mainteo}

The paper is organized as follows. In section 2 we fix some notation and present some results about the geometry of the spaces $\mathbb{E}(-1,\tau)$ and its minimal surfaces. In section 3 we prove our main result. Finally, in section 4 we construct some examples of generalized slabs.
\\\\
\textbf{Acknowledgments.} This work is part of the author's Ph.D. thesis at Instituto Naciona de Matem\'atica Pura e Aplicada (IMPA). The
author would like to express his sincere gratitude to his advisor Harold Rosenberg for his patience, constant encouragement and guidance. He would like also to thank Marco A. M. Guaraco for making the pictures that appear in the paper, Abigail Folha and Carlos Pe\~nafiel for provide the preprint \cite{F.P}, and Beno\^it Daniel, Jos\'e Espinar, Laurent Hauswirth, Laurent Mazet, L\' ucio Rodriguez, Magdalena Rodriguez and William Meeks III for discussions and their interesting in this work. Finally he also thanks the referee by the suggestions and corrections.

\section{The spaces $\mathbb{E}(-1,\tau)$}

Good references for the geometry of $\mathbb{E}(-1,\tau)$ are the papers \cite{F.P, M, P1, Y}, and the results not proved here can be found in them. 

For each $\tau$, the space $\mathbb{E}(-1,\tau)$ is diffeomorphic to $\mathbb{H}^{2}\times\mathbb{R}$. Let $(x,y) \mapsto \zeta (x,y)$ be a conformal parametrization of $\mathbb{H}^{2}$ and let $\lambda$ be the conformal factor such that the metric of $\mathbb{H}^{2}$, in these coordinates, is $\lambda^{2}\bigl(dx^2 + dy^2\bigr)$. Let $t \in (-\infty,+\infty)$ be the coordinate on the $\mathbb{R}$ factor. The metric of $\mathbb{E}(-1,\tau)$ in these coordinates is $$ds^{2} = \lambda^{2}\bigl(dx^2 + dy^2\bigr) + \biggl(2\tau\Bigl(\frac{\lambda_{y}}{\lambda}dx - \frac{\lambda_{x}}{\lambda}dy\Bigr) + dt^2\biggr)^2.$$ 

Observe that $E(-1,0)$ is isometric to $\mathbb{H}^{2}\times\mathbb{R}$. When $\tau = -1/2$ we have the following description: denote by $U\mathbb{H}^{2}$ the unit tangent bundle of $\mathbb{H}^{2}$ endowed with the Sasaki metric. The universal cover of this space endowed with the covering metric is isometric to $E(-1,-1/2)$, which corresponds to the {\it Thurston Geometry} $\widetilde\psl_{2}(\mathbb{R})$. 

The fibration over $\mathbb{H}^{2}$ on the coordinates introduced is given by $\Pi: \mathbb{E}(-1,\tau) \rightarrow \mathbb{H}^{2}$, $\Pi(x,y,t) = (x,y).$ Moreover the vertical unit Killing vector field is $\xi = \partial_{t}$. We can construct an orthonormal frame $\{E_{1}, E_{2}, E_{3}\}$ in the following way: consider the orthonormal frame of $\mathbb{H}^{2}$ given by $\{e_{1} = \lambda^{-1}\partial_{x}, e_{2} = \lambda^{-1}\partial_{y}\}$. Let $E_{1}$, $E_{2}$ be the horizontal lifts of $e_{1}$, $e_{2}$ respectively, and $E_{3} = \partial_{t}$, then $$d\Pi(E_{i}) = e_{i} \ \textrm{and} \ \langle E_{i},E_{3}\rangle = 0, \ i =1,2,$$ and in coordinates $E_{1} = \lambda^{-1}\partial_{x} - 2\tau\lambda^{-2}\lambda_{y}\partial_{t}, \ E_{2} = \lambda^{-1}\partial_{y} + 2\tau\lambda^{-2}\lambda_{x}\partial_{t}.$\\

We will wok with two models:
\begin{description}
\item [The half-space model] - Take the half-plane model of the hyperbolic plane, $$\mathbb{H}^{2} = \{(x, y) \in \mathbb{R}^{2}; y > 0\} \ \textrm{and} \ \lambda = \displaystyle\frac{1}{y};$$
\item[The cylinder model] - Take the disc model of the hyperbolic plane, $$\mathbb{H}^{2} = \{(u, v) \in \mathbb{R}^{2}; u^{2} + v^{2} < 1\} \ \textrm{and} \ \lambda = \displaystyle\frac{2}{1 - u^{2} - v^{2}}.$$
\end{description}

An isometry from the half-plane model to the disc model of $\mathbb{H}^{2}$ is given by the map $$\phi(x,y) = \biggl(\frac{2x}{x^{2} + (y + 1)^2},\frac{x^{2} + y^{2} - 1}{x^{2} + (y + 1)^2}\biggr).$$

\begin{lem}[\cite{F.P}]\label{isometry}
The map $\phi$ gives rise to an isometry from the half-space model to the cylinder model of $\mathbb{E}(-1,\tau)$, of the form $\Phi(x,y,t) = \bigl(\phi(x,y),w(x,y,t)\bigr)$, where $$w(x,y,t) = t + 4\tau\arctan\biggl(\frac{x}{y + 1}\biggr).$$
\end{lem}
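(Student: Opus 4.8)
The plan is to exploit the fact that both metrics have the fibered form $ds^2 = g + (\eta + dt)^2$, where $g = \lambda^2(dx^2+dy^2)$ is the hyperbolic metric of the base and $\eta = 2\tau\bigl(\tfrac{\lambda_y}{\lambda}dx - \tfrac{\lambda_x}{\lambda}dy\bigr)$ is a connection $1$-form on $\mathbb{H}^2$. Since $\phi$ is the standard isometry between the half-plane and disc models of $\mathbb{H}^2$, the horizontal parts already match: $\phi^* g_{\mathrm{cyl}} = g_{\mathrm{hs}}$. Writing $\Phi(x,y,t) = (\phi(x,y), w)$ with $w = t + f(x,y)$, one has $\Phi^*(dt_{\mathrm{cyl}}) = dw = dt + df$ and $\Phi^*\eta_{\mathrm{cyl}} = \phi^*\eta_{\mathrm{cyl}}$, so
\[
\Phi^* ds^2_{\mathrm{cyl}} = g_{\mathrm{hs}} + \bigl(\phi^*\eta_{\mathrm{cyl}} + dt + df\bigr)^2 .
\]
Thus $\Phi$ is an isometry precisely when $df = \eta_{\mathrm{hs}} - \phi^*\eta_{\mathrm{cyl}}$, and the entire statement reduces to producing a function $f$ with this differential and identifying it explicitly.

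The conceptual key is that the right-hand side is closed. A short computation using $\Delta\log\lambda = \lambda^2$ (equivalent to $K_{\mathbb{H}^2} = -1$) gives the model-independent identity $d\eta = -2\tau\,\omega$, where $\omega = \lambda^2\,dx\wedge dy$ is the hyperbolic area form. Because $\phi$ is area-preserving, $\phi^*\omega_{\mathrm{cyl}} = \omega_{\mathrm{hs}}$, whence $d(\eta_{\mathrm{hs}} - \phi^*\eta_{\mathrm{cyl}}) = -2\tau(\omega_{\mathrm{hs}} - \phi^*\omega_{\mathrm{cyl}}) = 0$. As the half-plane is simply connected, this closed $1$-form is exact, so a function $f$ with $df = \eta_{\mathrm{hs}} - \phi^*\eta_{\mathrm{cyl}}$ exists, unique up to an additive constant. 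This guarantees the vertical reparametrization $w$ without any explicit integration, and since $\phi$ is a diffeomorphism and $w$ is a vertical translation, $\Phi$ is automatically a diffeomorphism of the asserted form.

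It remains to pin down $f$. In the half-space model $\lambda = 1/y$ gives $\eta_{\mathrm{hs}} = -\tfrac{2\tau}{y}\,dx$, while in the disc model $\lambda = 2/(1-u^2-v^2)$ gives $\eta_{\mathrm{cyl}} = \tfrac{4\tau}{1-u^2-v^2}(v\,du - u\,dv)$. Substituting $\phi$ and simplifying produces $\phi^*\eta_{\mathrm{cyl}}$, and integrating $\eta_{\mathrm{hs}} - \phi^*\eta_{\mathrm{cyl}}$ yields $f(x,y) = 4\tau\arctan\bigl(\tfrac{x}{y+1}\bigr)$; equivalently, one differentiates this candidate and checks the identity directly, which is the cleaner route now that the structural argument has already certified that an antiderivative exists. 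The main (and only genuinely laborious) obstacle is this explicit manipulation of $\phi^*\eta_{\mathrm{cyl}}$: the pullback of the disc-model connection form under the M\"obius-type map $\phi$ involves several rational expressions that must be reduced to the compact differential matching $df$. Everything else is either structural or an immediate consequence of $\phi$ already being a known isometry of $\mathbb{H}^2$.
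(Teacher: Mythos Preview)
The paper does not prove this lemma at all: it is quoted verbatim from \cite{F.P} and stated without argument. So there is nothing to compare against, and your structural approach---writing both metrics as $g + (\eta + dt)^2$, reducing the isometry condition to $df = \eta_{\mathrm{hs}} - \phi^*\eta_{\mathrm{cyl}}$, and showing this $1$-form is closed via $d\eta = -2\tau\,\omega$ and $\phi^*\omega_{\mathrm{cyl}} = \omega_{\mathrm{hs}}$---is a clean and correct way to establish existence of the required vertical shift $f$, independent of any explicit formula.

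One caution: you stop short of actually carrying out the verification you advertise (``one differentiates this candidate and checks the identity directly''). If you do perform it with the conventions exactly as written in this paper, you will find that the stated $f(x,y) = 4\tau\arctan\bigl(\tfrac{x}{y+1}\bigr)$ satisfies $df = \phi^*\eta_{\mathrm{cyl}} - \eta_{\mathrm{hs}}$, i.e.\ the \emph{opposite} sign; for instance, along $x=0$ one gets $\eta_{\mathrm{hs}} - \phi^*\eta_{\mathrm{cyl}} = -\tfrac{4\tau}{y+1}\,dx$ while $df = +\tfrac{4\tau}{y+1}\,dx$. This is a sign discrepancy in the quoted formula (or a convention mismatch with \cite{F.P}), not a flaw in your method: your closedness argument already guarantees that \emph{some} $f$ works, and the correct one differs from the stated one only by a sign. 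Since the lemma is used in the paper only to conclude that the height functions of the two models differ by a bounded function (Remark~1), the sign is immaterial for the applications.
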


We now describe the isometries of $\mathbb{E}(-1,\tau)$. Each such map $F$ is fiber preserving and induces an isometry $f$ of $\mathbb{H}^{2}$ such that $\Pi\circ F = f\circ\Pi$. We can actually give a formula in terms of $f$. Identify $\mathbb{R}^{2}$ with the set of complex numbers $\mathbb{C}$, so we can write $(x,y) \in \mathbb{H}^{2}$ as $z = x + iy$.

\begin{prop}
The isometries of $\mathbb{E}(-1,\tau)$ are given by:
\begin{enumerate}
\item In the half-space model $$F(z,t) = \bigl(f(z),t - 2\tau\arg f'(z) + c\bigr)$$ or $$G(z,t) = \bigl(-\overline{f(z)},-t + 2\tau\arg f'(z) + c\bigr),$$ where $f$ is a positive isometry of the half-plane model of $\mathbb{H}^{2}$, $c \in \mathbb{R}$ and $\arg f': \mathbb{H}^2 \rightarrow \mathbb{R}$ is a smooth angle function, i.e, $e^{i\arg f'(z)} = \frac{f'(z)}{|f'(z)|}$, which has to be chosen.
\item In the cylinder model $$F(z,t) = \bigl(f(z),t - 2\tau\arg f'(z) + c\bigr)$$ or $$G(z,t) = \bigl(\overline{f(z)},-t + 2\tau\arg f'(z) + c\bigr),$$ where $f$ is a positive isometry of the disc model of $\mathbb{H}^{2}$, $c \in \mathbb{R}$ and $\arg f': \mathbb{H}^2 \rightarrow \mathbb{R}$ is a smooth angle function, i.e, $e^{i\arg f'(z)} = \frac{f'(z)}{|f'(z)|}$, which has to be chosen.
\end{enumerate}
\end{prop}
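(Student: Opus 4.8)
The plan is to prove fiber preservation first, then solve a linear first-order PDE for the vertical component.

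First I would justify the assertion, made just above the statement, that every isometry is fiber preserving. The cleanest route is through the Ricci tensor: a direct computation (or the known formula for $\mathbb{E}(\kappa,\tau)$) shows that $\ric$ has the vertical direction $\xi$ as an eigenvector with eigenvalue $2\tau^{2}$ and the horizontal plane $\xi^{\perp}$ as the eigenspace of the eigenvalue $-1-2\tau^{2}$. These two eigenvalues are always distinct, so the vertical line field $\mathbb{R}\xi$ is intrinsically defined. Since an isometry $F$ preserves $\ric$, $dF$ preserves this distinguished eigenline; as $\xi$ is unit and $dF$ is a linear isometry, $dF(\xi)=\epsilon\,\xi$ with $\epsilon=\pm 1$ (constant by continuity on each component). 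Consequently $F$ maps fibers to fibers and preserves the horizontal distribution $\xi^{\perp}$; because $\Pi$ is a Riemannian submersion, $F$ descends to an isometry $f$ of $\mathbb{H}^{2}$ with $\Pi\circ F=f\circ\Pi$. Finally, since $\xi$ is Killing, $F$ conjugates its flow $(z,t)\mapsto(z,t+s)$ to the flow of $\epsilon\xi$, which forces $F(z,t)=(f(z),\epsilon t+h(z))$ for some $h\colon\mathbb{H}^{2}\to\mathbb{R}$.

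Next I would write the metric as $ds^{2}=\lambda^{2}|dz|^{2}+\alpha^{2}$ with the connection form $\alpha=dt+\omega$, where $\omega=2\tau\bigl(\tfrac{\lambda_{y}}{\lambda}dx-\tfrac{\lambda_{x}}{\lambda}dy\bigr)=-2\tau\,{*}\,d\log\lambda$, noting that $d\omega=-2\tau\,dA_{\mathbb{H}^{2}}$. Since $dF(\xi)=\epsilon\xi$ and $\alpha$ is characterized (up to sign) by $\ker\alpha=\xi^{\perp}$ and $\alpha(\xi)=1$, the isometry condition $F^{*}ds^{2}=ds^{2}$ reduces, once the horizontal part is handled by $f\in\operatorname{Isom}(\mathbb{H}^{2})$, to $F^{*}\alpha=\epsilon\,\alpha$. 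Substituting $F(z,t)=(f(z),\epsilon t+h(z))$ gives the key equation
\[
 dh=\epsilon\,\omega-f^{*}\omega .
\]
I would first read off its integrability condition: $d(\epsilon\omega-f^{*}\omega)=-2\tau(\epsilon\,dA-f^{*}dA)$, which vanishes precisely when $\epsilon=+1$ for orientation-preserving $f$ and $\epsilon=-1$ for orientation-reversing $f$ (because $f^{*}dA=\pm dA$). This already explains why exactly the two families $F$ (positive $f$, $+t$) and $G$ (reflection, $-t$) occur, and, since $\mathbb{H}^{2}$ is simply connected, guarantees that $h$ exists and is unique up to an additive constant $c$.

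It remains to integrate the equation. Here I would use that $f$ (positive case) is a holomorphic isometry, so $(\lambda\circ f)\,|f'|=\lambda$; taking $d\log$ gives $f^{*}(d\log\lambda)=d\log\lambda-d\log|f'|$, and applying the conjugate differential—which commutes with $f^{*}$ for holomorphic $f$—together with the Cauchy–Riemann identity ${*}\,d\log|f'|=d\arg f'$ yields $f^{*}\omega=\omega+2\tau\,d\arg f'$. Plugging this into the displayed equation with $\epsilon=1$ gives $dh=-2\tau\,d\arg f'$, hence $h=-2\tau\arg f'+c$, the formula for $F$; the branch of $\arg f'$ is chosen smoothly on the simply connected $\mathbb{H}^{2}$, and a different choice only shifts $c$. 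For $G$ one writes the base map as $z\mapsto-\overline{f(z)}$, an anti-holomorphic isometry; now the conjugate differential anti-commutes with the pullback, so the signs reverse and one obtains $h=+2\tau\arg f'+c$ with $\epsilon=-1$. The cylinder model is handled identically, $f$ now being a holomorphic isometry of the disc, or simply by transporting the half-space formulas through the isometry $\Phi$ of Lemma \ref{isometry}. The converse—that every such $F$ and $G$ is indeed an isometry—follows by reversing these substitutions. The main obstacle is this last explicit computation, namely identifying $f^{*}\omega-\omega$ with $2\tau\,d\arg f'$: it is where holomorphicity of $\mathbb{H}^{2}$-isometries, the conformal factor relation, and the Cauchy–Riemann equations must be combined correctly, and where the orientation-reversing case demands careful sign bookkeeping. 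The fiber-preservation step, while conceptually the crux, is routine once the Ricci eigenvalues are computed.
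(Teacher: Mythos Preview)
The paper does not actually prove this proposition: it is stated as a known fact, with the remark at the start of Section~2 that ``the results not proved here can be found in'' the cited references. So there is no proof in the paper to compare your argument against.

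That said, your outline is correct and self-contained. The Ricci-eigenvalue argument cleanly establishes fiber preservation (the eigenvalues $2\tau^{2}$ and $-1-2\tau^{2}$ differ by $1+4\tau^{2}>0$, so the vertical line field is intrinsically distinguished for every $\tau$, including $\tau=0$). Writing the metric as $\lambda^{2}|dz|^{2}+\alpha^{2}$ and reducing the isometry condition to $F^{*}\alpha=\epsilon\alpha$, hence to $dh=\epsilon\,\omega-f^{*}\omega$, is the right framework; the integrability analysis via $d\omega=-2\tau\,dA_{\mathbb{H}^{2}}$ correctly forces the pairing of the sign $\epsilon$ with the orientation behaviour of the base isometry. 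The integration step---using $(\lambda\circ f)\,|f'|=\lambda$, conformal invariance of the Hodge star on $1$-forms under holomorphic maps, and the Cauchy--Riemann identity ${*}\,d\log|f'|=d\arg f'$---gives $f^{*}\omega=\omega+2\tau\,d\arg f'$ and hence $h=-2\tau\arg f'+c$ as claimed. For the orientation-reversing family one checks directly that the reflection $\sigma(z)=-\bar z$ satisfies $\sigma^{*}\omega=-\omega$ in the half-space model (where $\omega=-\tfrac{2\tau}{y}\,dx$), and your sign bookkeeping then yields $h=+2\tau\arg f'+c$ with $\epsilon=-1$. This is a clean, conceptual derivation and would serve perfectly well as the missing proof.
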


Observe that the coordinate $t$ is not necessarily preserved by these isometries. This can be explained by the fact $\mathbb{E}(-1,\tau)$ is not a product space if $\tau \neq 0$, thus the $t$ coordinate is not geometrically invariant.

In the next lemma we describe in coordinates the horizontal lift of a curve $\alpha \in \mathbb{H}^{2}$.

\begin{lem}[\cite{F.P}]\label{boundt}
Let $\alpha$ be a connected curve in $\mathbb{H}^{2}$, and denote by $\tilde\alpha$ its horizontal lift to $\mathbb{E}(-1,\tau)$. Suppose $\tilde\alpha$ is contained in a slice $\{t = t_{0}\}$ then:
\begin{enumerate}
\item If we consider the disc model, $\alpha$ is contained in a complete geodesic passing through the origin $(0,0)$;
\item If we consider the half-plane model, $\alpha$ is contained in $\{x = x_0\}$ for some $x_0 \in \mathbb{R}$.
\end{enumerate}
Moreover, on the Half-space model, the horizontal lift of a geodesic has bounded $t$-coordinate. 
\end{lem}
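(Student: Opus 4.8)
\section*{Proof proposal for Lemma \ref{boundt}}

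The plan is to work directly in coordinates using the explicit orthonormal frame $\{E_1,E_2,E_3\}$ and the metric formula for $\mathbb{E}(-1,\tau)$. The key observation is that a horizontal lift $\tilde\alpha$ of a curve $\alpha \subset \mathbb{H}^2$ is, by definition, everywhere tangent to the horizontal distribution spanned by $E_1,E_2$; equivalently, $\langle \tilde\alpha',\xi\rangle = 0$ along $\tilde\alpha$. Writing $\tilde\alpha(s) = (x(s),y(s),t(s))$, the horizontality condition together with the coordinate expressions $E_1 = \lambda^{-1}\partial_x - 2\tau\lambda^{-2}\lambda_y\partial_t$ and $E_2 = \lambda^{-1}\partial_y + 2\tau\lambda^{-2}\lambda_x\partial_t$ forces $\tilde\alpha'$ to have no $\partial_t$-component beyond what these lifts supply. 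Concretely, I would derive the ODE governing $t(s)$, namely
\begin{equation*}
t'(s) = 2\tau\,\frac{\lambda_y}{\lambda^2}\,x'(s) - 2\tau\,\frac{\lambda_x}{\lambda^2}\,y'(s),
\end{equation*}
which expresses that the $t$-coordinate picks up exactly the connection $1$-form $2\tau\bigl(\frac{\lambda_y}{\lambda}dx - \frac{\lambda_x}{\lambda}dy\bigr)$ integrated along the projected curve.

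For the first two (mutually exclusive, model-dependent) claims, I would impose $t(s) \equiv t_0$, so $t'(s)=0$, and read off the constraint $\lambda_y\,x' = \lambda_x\,y'$ along $\alpha$. In the disc model $\lambda = 2/(1-u^2-v^2)$, a direct computation gives $\lambda_u = 4u/(1-u^2-v^2)^2$ and $\lambda_v = 4v/(1-u^2-v^2)^2$, so the constraint becomes $v\,u' = u\,v'$; this is precisely the condition that $\alpha$ lies on a ray through the origin, and since such a ray in the disc model is a complete geodesic through $(0,0)$, claim $1$ follows. In the half-plane model $\lambda = 1/y$, so $\lambda_x = 0$ and $\lambda_y = -1/y^2$, and the constraint reduces to $\lambda_y\,x' = 0$, i.e. $x'=0$, hence $x(s)\equiv x_0$, giving claim $2$. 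The main technical care here is simply to carry out these partial derivatives correctly and to recognize the resulting first-order conditions as the stated loci; no deep argument is needed, only a clean identification of each constraint with its geometric meaning.

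For the final assertion — that in the half-space model the horizontal lift of a geodesic has bounded $t$-coordinate — I would integrate the $t$-ODE along a geodesic $\gamma$ of the half-plane model. Using $\lambda=1/y$ we get $\lambda_y/\lambda^2 = -1$ and $\lambda_x/\lambda^2 = 0$, so the ODE collapses to $t'(s) = -2\tau\,x'(s)$, giving $t(s) - t(s_0) = -2\tau\bigl(x(s)-x(s_0)\bigr)$. Thus the total variation of $t$ along the lift is controlled entirely by the oscillation of the $x$-coordinate of $\gamma$: since every complete geodesic of the half-plane model is either a vertical half-line (on which $x$ is constant, so $t$ is constant) or a Euclidean half-circle orthogonal to $\partial\mathbb{H}^2$ (on which $x$ ranges over a bounded interval between the two endpoints on the boundary), the quantity $x(s)$ stays in a bounded set, and therefore $t(s)$ is bounded. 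The expected obstacle is minor and is concentrated in the computations of the second paragraph, where one must verify that the horizontality ODE indeed reduces to the clean linear relation $t' = -2\tau x'$ in the half-plane chart; once that identity is in hand, the boundedness is immediate from the elementary description of half-plane geodesics.
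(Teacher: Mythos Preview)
Your approach matches the paper's for the first two claims: derive the horizontality ODE, set $t'=0$, and identify the resulting locus in each model. That part is fine.

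However, your ODE is miscomputed, and the error is fatal for the ``moreover'' clause. Writing $\tilde\alpha' = aE_1 + bE_2$ and matching the $\partial_x$, $\partial_y$ components forces $a=\lambda x'$, $b=\lambda y'$; substituting back into the $\partial_t$ component of $aE_1+bE_2$ gives
\[
t' \;=\; -2\tau\,\frac{\lambda_y}{\lambda}\,x' \;+\; 2\tau\,\frac{\lambda_x}{\lambda}\,y',
\]
with $\lambda$ (not $\lambda^2$) in the denominator and the opposite sign. For the first two claims this is harmless, since the condition $t'=0$ reduces to the same relation $\lambda_y x' = \lambda_x y'$ either way. But in the half-plane model the correct ODE reads $t' = 2\tau\,x'/y$, \emph{not} $t' = -2\tau\,x'$. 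Your clean formula $t(s)-t(s_0) = -2\tau\bigl(x(s)-x(s_0)\bigr)$ is therefore false, and the argument ``$x$ stays bounded along a half-circle, hence $t$ stays bounded'' does not follow from the correct equation: the integrand $x'/y$ need not be controlled by the oscillation of $x$ alone, since $y\to 0$ at the ideal endpoints.

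The paper closes this gap by actually integrating along the half-circle geodesic $\gamma(\theta)=(R\cos\theta+x_0,\,R\sin\theta)$: then $x'/y = -1$, so $t(\theta)=t_0-2\tau\theta$ with $\theta\in(0,\pi)$, which is visibly bounded. You should replace your final paragraph with this explicit computation (or an equivalent one recognizing $2\tau\,x'/y\,d s$ as $-2\tau\,d\theta$ along the circle); the boundedness then comes from the bounded range of the angle parameter, not from the bounded range of $x$.
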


\begin{proof}
Let $\alpha: I \rightarrow \mathbb{H}^{2}$ be a curve defined on a open interval $I$, $\alpha(s) = \bigl(x(s),y(s)\bigr)$. A horizontal lift has the form $\tilde\alpha(s) = \bigl(x(s),y(s),t(s)\bigr)$, and its velocity vector is given by $$\tilde\alpha'(s) = \lambda x'(s)E_{1} + \lambda y'(s)E_{2} + \bigl[t'(s) + 2\tau\lambda^{-1}\bigl(x'(s)\lambda_{y} - y'(s)\lambda_{x}\bigr)\bigr]E_{3}.$$
Since $\langle \tilde\alpha'(s),E_{3}\rangle = 0$ we have 
\begin{equation}
t'(s) + 2\tau\lambda^{-1}\bigl(x'(s)\lambda_{y} - y'(s)\lambda_{x}\bigr) = 0.
\end{equation}
1) In the disc model the ODE (1) has solution $$t(s) = t_{0} + 2\tau\int_{s_0}^{s} \lambda(r)\bigl[x(r)y'(r) - x'(r)y(r)\bigr]dr.$$
Then $\tilde\alpha(s)$ is inside a slice $\{t = t_{0}\}$ if, and only if, $y(s) = \pm ax(s), a > 0.$
\\\\
2) In the half-plane model the solution of (1) is $$t(s) = t_{0} + 2\tau\int_{s_0}^{s} \lambda(r)x'(r)dr.$$
Then $\tilde\alpha(s)$ is inside a slice $\{t = t_{0}\}$ if, and only if, $x(s) = \textrm{constant}, \forall s \in I$.
\\

Consider the half-space model of $\mathbb{E}(-1,\tau)$. Besides the curves $x = \textrm{constant}$, the geodesics of $\mathbb{H}^2$ in the half-plane model are the curves $\gamma: (0,2\pi) \rightarrow \mathbb{H}^2$ defined by $\gamma(\theta) = (R\cos\theta + x_0,R\sin\theta).$ So the $t$-coordinate of the horizontal lift of $\gamma$ is given by $t(\theta) = t_0 - 2\tau\theta$, hence is bounded.

\end{proof}

\subsection{CMC Graphs}\label{cmcgraph}

A surface in $\mathbb{E}(-1,\tau)$ is called a {\it vertical graph} if is the image of a smooth section $\sigma: \Omega \subset \mathbb{H}^{2} \rightarrow \mathbb{E}(-1,\tau)$ of the Riemannian submersion, i.e $\Pi\circ \sigma = \id_{\Omega}$, for some domain $\Omega$. For such a map let $u(x,y)$ be the signed distance from $(x,y,0)$ to $\sigma(x,y)$ along the geodesic fiber through $(x,y,0)$. This defines a function $u \in C^{0}(\partial\Omega)\cap C^{\infty}(\Omega)$ such that $$\Sigma(u) := \bigl\{\bigl(x,y,u(x,y) \in \mathbb{E}(-1,\tau); (x,y) \in \Omega\bigr)\bigr\}$$ is the image of the section $\sigma$. Respectively a function $u \in C^{0}(\partial\Omega)\cap C^{\infty}(\Omega)$ defines a smooth section.

The unit normal vector of $\Sigma(u)$ is given by $N = \frac{1}{W}( aE_{1} + bE_{2} + E_{3})$, where $a = \displaystyle -\frac{u_{x}}{\lambda} - 2\tau\frac{\lambda_{y}}{\lambda^{2}}$, $b = \displaystyle -\frac{u_{y}}{\lambda} + 2\tau\frac{\lambda_{x}}{\lambda^{2}}$ and $W = \sqrt{1 + a^2 + b^2}$.

\begin{lem}
Let $u: \Omega \subset \mathbb{H}^{2} \rightarrow \mathbb{R}$ be a function whose graph $\Sigma(u)$ in $\mathbb{E}(-1,\tau)$ has constant mean curvature $H$. Then $u$ satisfies $$\di_{\mathbb{H}^{2}}\biggl(\frac{a}{\lambda W}\partial_{x} + \frac{b}{\lambda W}\partial_{y}\biggr) = 2H.$$
\end{lem}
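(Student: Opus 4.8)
The plan is to read the equation off as a consequence of the standard fact that, once the unit normal $N$ of $\Sigma(u)$ is extended to a local foliation by surfaces of constant mean curvature $H$, the ambient divergence $\di_{\mathbb{E}(-1,\tau)} N$ restricted to $\Sigma(u)$ equals $2H$ (with the sign convention for $H$ determined by $N$). So the first step is to produce such a foliation and the second is a coordinate computation of this divergence.

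\emph{Step 1: the foliation.} Since the conformal factor $\lambda$ depends only on $(x,y)$, the metric $ds^{2}$ written above is invariant under the vertical translation $(x,y,t)\mapsto(x,y,t+c)$, so this translation is an isometry of $\mathbb{E}(-1,\tau)$ carrying $\Sigma(u)$ to $\Sigma(u+c)$. Hence every member of the family $\{\Sigma(u+c)\}_{c\in\mathbb{R}}$ has constant mean curvature $H$ with respect to its upward unit normal, and these surfaces foliate the solid cylinder $\Omega\times\mathbb{R}\subset\mathbb{E}(-1,\tau)$. The decisive observation is that the functions $a$, $b$, $W$ associated with the leaf $\Sigma(u+c)$ depend on the section only through the partial derivatives $u_x$, $u_y$, which are unchanged under $u\mapsto u+c$; therefore, viewing $N$ as a vector field on $\Omega\times\mathbb{R}$ by declaring it constant along the fibers of the foliation, its components in the coordinates $(x,y,t)$ do not depend on $t$.

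\emph{Step 2: computing the divergence.} Using $E_1=\lambda^{-1}\partial_x-2\tau\lambda^{-2}\lambda_y\partial_t$, $E_2=\lambda^{-1}\partial_y+2\tau\lambda^{-2}\lambda_x\partial_t$ and $E_3=\partial_t$, the formula $N=\frac1W(aE_1+bE_2+E_3)$ gives
\[
N=\frac{a}{\lambda W}\,\partial_x+\frac{b}{\lambda W}\,\partial_y+N^{t}\,\partial_t ,
\]
where $N^{t}$ is a function of $(x,y)$ only, by Step 1. From the expression for $ds^{2}$ one sees that in the orthonormal coframe $\{\lambda\,dx,\ \lambda\,dy,\ 2\tau(\tfrac{\lambda_y}{\lambda}dx-\tfrac{\lambda_x}{\lambda}dy)+dt\}$ the volume form is the wedge of these three $1$-forms, which simplifies to $dV_{\mathbb{E}(-1,\tau)}=\lambda^{2}\,dx\wedge dy\wedge dt$. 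Consequently
\[
\di_{\mathbb{E}(-1,\tau)}N=\frac1{\lambda^{2}}\Bigl[\partial_x\bigl(\lambda^{2}N^{x}\bigr)+\partial_y\bigl(\lambda^{2}N^{y}\bigr)+\partial_t\bigl(\lambda^{2}N^{t}\bigr)\Bigr]=\frac1{\lambda^{2}}\Bigl[\partial_x\Bigl(\frac{\lambda a}{W}\Bigr)+\partial_y\Bigl(\frac{\lambda b}{W}\Bigr)\Bigr],
\]
the last term dropping because $N^{t}$ and $\lambda$ are independent of $t$. Recognising the right-hand side as $\di_{\mathbb{H}^{2}}\!\bigl(\tfrac{a}{\lambda W}\partial_x+\tfrac{b}{\lambda W}\partial_y\bigr)$, since for $X=X^{1}\partial_x+X^{2}\partial_y$ one has $\di_{\mathbb{H}^{2}}X=\lambda^{-2}[\partial_x(\lambda^{2}X^{1})+\partial_y(\lambda^{2}X^{2})]$, and using $\di_{\mathbb{E}(-1,\tau)}N|_{\Sigma(u)}=2H$, we obtain the asserted equation.

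\emph{Main obstacle.} There is no substantial difficulty; the only points requiring care are bookkeeping ones. First, the identity $\di_{\mathbb{E}(-1,\tau)}N|_{\Sigma}=2H$ depends on the sign convention chosen for the mean curvature relative to $N$: with the opposite convention the right-hand side becomes $-2H$, and one must check consistency with the orientation fixed by $N=\frac1W(aE_1+bE_2+E_3)$. Second, one must justify that extending $N$ along the foliation is legitimate and that its divergence along a leaf is the trace of the shape operator — this is the usual computation $\di N=\langle\nabla_NN,N\rangle+\sum_i\langle\nabla_{e_i}N,e_i\rangle$ in an adapted frame, valid in any Riemannian $3$-manifold. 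An alternative route, sidestepping the foliation, is to write the area as $\mathcal{A}(u)=\int_\Omega\lambda^{2}W\,dx\,dy$ (the Jacobian of $\Pi|_{\Sigma(u)}$ equals $\langle N,\xi\rangle=1/W$) and the algebraic enclosed volume as $\int_\Omega\lambda^{2}u\,dx\,dy$, and then to identify the equation as the Euler--Lagrange equation of $\mathcal{A}-2H\int_\Omega\lambda^{2}u\,dx\,dy$, using $\partial W/\partial u_x=-a/(\lambda W)$ and $\partial W/\partial u_y=-b/(\lambda W)$.
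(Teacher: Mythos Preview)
The paper states this lemma without proof, treating it as a known formula drawn from the references \cite{F.P, M, P1, Y} cited at the start of Section~2. Your argument supplies a correct derivation: extending $N$ along the foliation by vertical translates of $\Sigma(u)$ is legitimate because $\partial_t$ is Killing, the volume form $\lambda^2\,dx\wedge dy\wedge dt$ is correctly identified, and the reduction of $\di_{\mathbb{E}(-1,\tau)}N$ to the $\mathbb{H}^2$-divergence goes through exactly as you wrote since $N^t$ and $\lambda$ are $t$-independent. The variational alternative you sketch at the end is equally valid and is in fact the route most of the cited references take. Your remarks on the sign convention are appropriate; there is nothing more to add.
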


\subsection{Catenoids}\label{cat}

Catenoids in $\mathbb{H}^{2}\times\mathbb{R}$ were first constructed by Nelli and Rosenberg on \cite{N.R1,N.R2}. One important property of these surfaces is they exist precisely when their height is less than $\pi$. In the spaces $\mathbb{E}(-1,\tau)$ this kind of surface was constructed by C. Pe\~ nafiel in \cite{P1}. We describe them below.

In the disc model of $\mathbb{H}^{2}$ euclidean rotations about the origin are isometries. Lifting these maps to isometries in $\mathbb{E}(-1,\tau)$, the image of a point $(x,y,t)$ is obtained rotating by $\theta_{0}$ the $(x,y)$ part around the $t-$axis then translating it along the $t-$axis by $-2\tau\theta_{0}$. Now compose with translation by $2\tau\theta_{0}$ along the $t-$axis and we obtain one-parameter group of rotations along this axis. Rotational surfaces are those invariant by this group.

Consider polar coordinates $(x,y) = \bigl(\tanh(\frac{\rho}{2})\cos \theta, \tanh(\frac{\rho}{2})\sin \theta\bigr)$. The function $$u(\rho) = \int_{\arcsin (d)}^{\rho} \frac{d\sqrt{1 + 4\tau^{2}\tanh^{2}(\frac{r}{2})}}{\sqrt{\sinh^{2}r - d^{2}}}dr,$$
gives rise to a minimal bi-graph:

\begin{prop}[\cite{P1}]
For each $d \geq 0$ there exists a complete minimal rotational surface $\mathcal{C}_{d}$ in $\mathbb{E}(-1,\tau)$. The surface $\mathcal{C}_{0}$ is the slice $\{t = 0\}$. For $d > 0$ the rotational surface $\mathcal{C}_{d}$ (called catenoid) is embedded and homeomorphic to an annulus. The asymptotic boundary of $\mathcal{C}_{d}$ is two horizontal circles in $\partial_{\infty}(\mathbb{D})\times\mathbb{R}$ and the vertical distance between them is the non-decreasing function $\mathcal{H}$ defined by $$\mathcal{H}(d) = \int_{\arcsin (d)}^{+\infty} \frac{d\sqrt{1 + 4\tau^{2}\tanh^{2}(\frac{r}{2})}}{\sqrt{\sinh^{2}r - d^{2}}}dr,$$ and which satisfies $$\lim_{d \to 0} \mathcal{H}(d) = 0, \ \ \ \  \textrm{and} \ \ \ \ \lim_{d \to \infty} \mathcal{H}(d) = \pi\sqrt{1 + 4\tau^{2}}.$$ 
\end{prop}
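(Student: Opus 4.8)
The plan is to realize $\mathcal{C}_d$ as a surface invariant under the one-parameter group of vertical screw motions described above and to reduce the minimal surface equation to an ODE for a radial profile. Invariance under this group forces the surface to be a bi-graph $\Sigma(u)$ whose height depends only on the hyperbolic distance $\rho$ to the axis, so I would set $u=u(\rho)$ and feed this ansatz into the equation $\di_{\mathbb{H}^2}\bigl(\frac{a}{\lambda W}\partial_x+\frac{b}{\lambda W}\partial_y\bigr)=0$ from the CMC graphs lemma with $H=0$. Working in the disc model with polar coordinates, the decisive observation is that the two summands $-\nabla u/\lambda$ and the $\tau$-terms $(-2\tau\lambda_y/\lambda^2,\,2\tau\lambda_x/\lambda^2)$ entering $(a,b)$ are orthogonal: the first is radial with hyperbolic length $|u'(\rho)|$, while the second is tangential to the geodesic circles and has length $2\tau\tanh(\rho/2)$ (here one uses $\lambda=2/(1-x^2-y^2)$ together with $\tanh(\rho/2)=\sqrt{x^2+y^2}$). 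Hence $W^2=1+(u')^2+4\tau^2\tanh^2(\rho/2)$ and the tangential part contributes nothing to the radial flux.

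Second, I would exploit the rotational symmetry to integrate the equation once. In hyperbolic polar coordinates the divergence of the flux field reduces to $\frac{1}{\sinh\rho}\frac{d}{d\rho}\bigl(\sinh\rho\cdot\frac{u'}{W}\bigr)=0$, because the tangential component is $\theta$-independent and drops out. This yields the first integral $\sinh\rho\,\frac{u'}{W}=d$ with $d$ a constant (the radial flux), and solving the resulting algebraic relation for $u'$ gives exactly
$$u'(\rho)=\frac{d\sqrt{1+4\tau^2\tanh^2(\rho/2)}}{\sqrt{\sinh^2\rho-d^2}},$$
which integrates to the stated $u(\rho)$. For $d=0$ we get $u'\equiv0$, so $\mathcal{C}_0$ is the slice $\{t=0\}$. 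For $d>0$ the profile is defined for $\rho\ge\rho_0$ with $\sinh\rho_0=d$ (the neck), where $u'\to+\infty$: the tangent plane is vertical there, which is why the surface is a bi-graph rather than a graph. I would check that $\mathcal{C}_d$ closes up smoothly across the neck circle $\{\rho=\rho_0,\,t=0\}$ by reparametrizing near it as a horizontal graph (writing $\rho$ as a smooth even function of $t$), and that, since $u$ is strictly increasing in $\rho$, each sheet is embedded; the two sheets meet only along the neck, so $\mathcal{C}_d$ is an embedded annulus.

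Finally, I would analyze the separation function $\mathcal{H}(d)$. Convergence of the integral follows from the two endpoint estimates: near the neck the integrand behaves like $(\rho-\rho_0)^{-1/2}$, and as $\rho\to\infty$ it decays like $e^{-\rho}$, so both ends are integrable; letting $\rho\to\infty$ shows that the asymptotic boundary consists of two horizontal circles in $\partial_\infty(\mathbb{D})\times\mathbb{R}$. The key tool for the remaining statements is the substitution $\sinh r=d\cosh s$, which turns $\mathcal{H}(d)$ into $\int_0^\infty \frac{\sqrt{1+4\tau^2\tanh^2(r/2)}}{\sqrt{d^{-2}+\cosh^2 s}}\,ds$, with $r=r(s,d)$ determined by $\sinh r=d\cosh s$. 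Monotonicity is then transparent: the factor $(d^{-2}+\cosh^2 s)^{-1/2}$ increases in $d$, and since $r$ increases with $d$ so does $\tanh(r/2)$, so both factors of the integrand are non-decreasing in $d$ and $\mathcal{H}'(d)>0$. The limit $d\to0$ gives $\mathcal{H}\to0$ by dominated convergence (the integrand is bounded by $\sqrt{1+4\tau^2}\,\operatorname{sech} s$ and tends to $0$ pointwise), while for $d\to\infty$ one has $d^{-2}\to0$ and $\tanh(r/2)\to1$, so the integrand converges to $\sqrt{1+4\tau^2}\,\operatorname{sech} s$, and the elementary value $\int_0^\infty\operatorname{sech} s\,ds=\pi/2$, together with the symmetry of the two sheets, produces the stated limit $\pi\sqrt{1+4\tau^2}$.

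I expect the main obstacle to be the first reduction: carefully verifying that the screw-motion invariance yields a radial bi-graph and, above all, producing the correct first integral in the twisted metric. The delicate point is the bookkeeping of the $\tau$-terms, which must contribute the factor $\sqrt{1+4\tau^2\tanh^2(\rho/2)}$ in the numerator without disturbing the conserved radial flux; once this is pinned down, the qualitative geometry and the analysis of $\mathcal{H}$ via the substitution $\sinh r=d\cosh s$ are comparatively routine. A secondary technical point is confirming smoothness of $\mathcal{C}_d$ across the neck, where the graph description degenerates.
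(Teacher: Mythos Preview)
The paper does not actually prove this proposition: it is quoted from \cite{P1} and no argument is supplied. So there is nothing in the paper to compare your proposal against. That said, your outline is the standard derivation and is correct in all essential points.

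A few remarks. Your decomposition of $(a,b)$ into a radial part of length $|u'(\rho)|$ and a tangential part of length $2|\tau|\tanh(\rho/2)$ is right (using $\lambda_x=x\lambda^2$, $\lambda_y=y\lambda^2$ in the disc model and $\sqrt{x^2+y^2}=\tanh(\rho/2)$), and it is exactly this orthogonality that makes the $\tau$--terms enter $W$ but not the conserved radial flux. The first integral $\sinh\rho\cdot u'/W=d$ then gives the stated $u'(\rho)$ by straightforward algebra. The substitution $\sinh r=d\cosh s$ is the clean way to handle monotonicity and both limits of $\mathcal{H}$; your dominated/monotone convergence arguments are valid as written.

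Two minor points you may want to flag. First, the lower limit ``$\arcsin(d)$'' in the statement is a misprint for $\operatorname{arcsinh}(d)$; you have already implicitly corrected this by taking $\sinh\rho_0=d$. Second, the integral defining $\mathcal{H}(d)$ as printed is the height of a \emph{single} sheet, and under your substitution it tends to $\tfrac{\pi}{2}\sqrt{1+4\tau^2}$, not $\pi\sqrt{1+4\tau^2}$; you reconcile this correctly by invoking the symmetry of the bi--graph (the isometry $(z,t)\mapsto(\bar z,-t)$ of the cylinder model swaps the two sheets), so that the vertical distance between the two asymptotic circles is $2\mathcal{H}(d)$ and its limit is the stated $\pi\sqrt{1+4\tau^2}$. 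It is worth saying this explicitly rather than letting the factor of $2$ appear only in the last sentence.
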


\subsection{Some simply connected minimal surfaces} 

Beside graphs the simplest examples of simply connected surfaces in $\mathbb{E}(-1,\tau)$ are the vertical cylinders over curves, i.e, the surfaces of the kind $\Pi^{-1}(\alpha)$, where $\alpha$ is a curve in $\mathbb{H}^{2}$. The mean curvature $H$ of these surfaces satisfies $$2H(q) = k_{g}(p),$$ where $p = \Pi(q)$ and $k_{g}$ is the geodesic curvature of $\alpha$. Thus if $\alpha$ is a complete geodesic, $\Pi^{-1}(\alpha)$ is a properly embedded minimal surface. These are the only minimal surfaces invariant under any vertical translation.

\begin{figure}[!ht]
\centering
\includegraphics[scale=0.8]{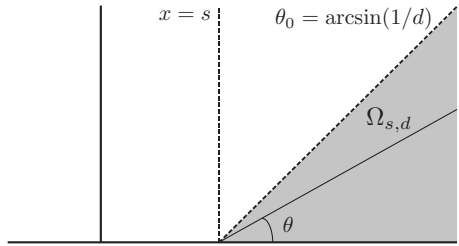}
\caption{The domain $\Omega_{s,d}$}
\label{fig5}
\end{figure}

We present now another family of simply connected properly embedded minimal surfaces obtained as invariant surfaces. Consider the half-plane model for the hyperbolic plane. Fix the point $(s,0) \in \partial_{\infty}\mathbb{H}^2$. We can introduce polar coordinates centered at $(s,0)$ associating for each $(\phi,\theta) \in \mathbb{R}\times (0,\pi)$ the point $(e^{\phi}\cos{\theta} + s,e^{\phi}\sin{\theta}) \in \mathbb{H}^2$. In these coordinates, the hyperbolic metric becomes $$\frac{1}{\sin^2{\theta}}(d\phi^2 + d\theta^2).$$
 
The geodesic $\gamma_{s} = \{(s,t); t > 0\}$ corresponds to the curve $\{\theta = \pi/2\}$ and each curve $\gamma_{s,\theta} = \{\theta = \theta_{0}\}$ is equidistant to this geodesic. The distance between the equidistant and the geodesic is given by $$\dist(\theta_0) = \biggl|\int_{\theta_0}^{\frac{\pi}{2}}\frac{d\theta}{\sin{\theta}}\biggr|.$$

\begin{prop}[\cite{F.P}]
Let $d > 1$. Define $u_{d}^{-}, u_{d}^{+}: \bigl(0,\arcsin (1/d)\bigr) \rightarrow \mathbb{R}$ by $$u_{d}^{+}(\theta) = \int_{\theta}^{\arcsin (1/d)}\frac{d\sqrt{1 + 4\tau^{2}\cos^{2}\psi}}{\sqrt{1 - d^{2}\sin^{2}\psi}}d\psi - 2\tau\bigl(\arcsin (1/d) + \theta\bigr),$$ and
$$u_{d}^{-}(\theta) = -\int_{\theta}^{\arcsin (1/d)}\frac{d\sqrt{1 + 4\tau^{2}\cos^{2}\psi}}{\sqrt{1 - d^{2}\sin^{2}\psi}}d\psi + 2\tau\bigl(\arcsin (1/d) - \theta\bigr),$$
and let $\Omega_{s,d}$ be the domain of $\mathbb{H}^{2}$ with boundary $\gamma_{s,\theta}\cup\{(x,0); x > s\}$. These functions define vertical minimal graphs over $\Omega_{s,d}$, invariant by hyperbolic translation over $\gamma_{s}$. Moreover, defining 
$$h(d) = \int_{0}^{\arcsin (\frac{1}{d})}\frac{d\sqrt{1 + 4\tau^{2}\cos^{2}\psi}}{\sqrt{1 - d^{2}\sin^{2}\psi}}d\psi,$$ we have
$$\lim_{d \to +\infty} h(d) = \frac{\pi}{2}\sqrt{1 + 4\tau^{2}}, \ \ \ \lim_{d \to 1^{+}} h(d) = +\infty.$$
The union of these graphs gives rise to a complete minimal surface $\mathcal{M}_{h}(s)$ whose projection on $\mathbb{H}^{2}$ is $\Omega_{s,d}$. The tangent planes of $\mathcal{M}_{h}(s)$ are vertical along points which project on the curve $\gamma_{s, \arcsin(1/d)}$. The asymptotic boundary is the union of $$\bigl\{(x,0,t); x \geq s, t = -h(d) + 2\tau\arcsin (1/d) \ \textrm{or} \ t = h(d) - 2\tau\arcsin (1/d)\bigr\}$$ and the vertical segments joining the end points of these arcs.
\end{prop}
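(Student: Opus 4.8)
The plan is to construct $\mathcal{M}_h(s)$ as a surface invariant under the one-parameter group of hyperbolic translations along $\gamma_s$, reducing the minimal surface equation of Subsection \ref{cmcgraph} to an ODE. I would realize $\gamma_s$ as the vertical half-line $\{x = s\}$ in the half-plane model and use the polar coordinates $(\phi,\theta) \mapsto (e^{\phi}\cos\theta + s, e^{\phi}\sin\theta)$, in which the hyperbolic translations act by $\phi \mapsto \phi + c$ and the metric of $\mathbb{H}^2$ is isothermal with conformal factor $\lambda = 1/\sin\theta$. Since the translating isometries $f_c(z) = s + e^{c}(z - s)$ have $f_c'(z) = e^{c} > 0$, hence $\arg f_c' \equiv 0$, their lifts to $\mathbb{E}(-1,\tau)$ preserve the $t$-coordinate; therefore an invariant vertical graph is of the form $t = u(\theta)$.

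First I would rewrite the ambient metric in the coordinates $(\phi,\theta,t)$. The delicate point — and the one I expect to be the main obstacle — is that the connection $1$-form $2\tau(\frac{\lambda_y}{\lambda}dx - \frac{\lambda_x}{\lambda}dy)$ is \emph{not} gauge-invariant under a change of isothermal coordinates: passing from $(x,y)$ to $(\phi,\theta)$ it changes by the exact form $d(2\tau\theta)$. Absorbing this by the substitution $\tilde u = u + 2\tau\theta$ puts the metric in the standard form associated with $(\phi,\theta)$, so that the graph equation $\di_{\mathbb{H}^2}(\frac{a}{\lambda W}\partial_\phi + \frac{b}{\lambda W}\partial_\theta) = 0$ applies verbatim with $\lambda = 1/\sin\theta$. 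For a $\phi$-independent graph one computes $a = 2\tau\cos\theta$, $b = -\tilde u_\theta\sin\theta$ and $W = \sqrt{1 + 4\tau^2\cos^2\theta + \tilde u_\theta^2\sin^2\theta}$; because nothing depends on $\phi$, the divergence collapses to $\frac{d}{d\theta}(\lambda b/W) = 0$, giving the first integral $\tilde u_\theta / W = \mathrm{const} =: d$. Failing to track the gauge term here is exactly what would drop the linear $2\tau\theta$ summands from the final formulas, so this step must be done carefully.

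Solving the first integral gives $\tilde u_\theta = \pm d\sqrt{1 + 4\tau^2\cos^2\theta}\,/\,\sqrt{1 - d^2\sin^2\theta}$; reality of the right-hand side forces $1 - d^2\sin^2\theta > 0$, i.e. $\theta \in (0,\arcsin(1/d))$ when $d > 1$, which determines the domain $\Omega_{s,d}$. Integrating from the base point $\theta = \arcsin(1/d)$ and undoing the gauge via $u = \tilde u - 2\tau\theta + \mathrm{const}$ produces exactly the two branches $u_d^{+}$ and $u_d^{-}$, the additive constants $\mp 2\tau\arcsin(1/d)$ being fixed by the choice of base point. As $\theta \to \arcsin(1/d)^{-}$ one has $\tilde u_\theta \to +\infty$, so both graphs acquire a vertical tangent plane along the common boundary curve lying over the equidistant $\gamma_{s,\arcsin(1/d)}$; gluing $\Sigma(u_d^{+})$ to $\Sigma(u_d^{-})$ along this curve yields the complete bi-graph $\mathcal{M}_h(s)$, whose projection is $\Omega_{s,d}$ and whose vertical-tangent locus projects precisely to $\gamma_{s,\arcsin(1/d)}$.

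It remains to analyze $h(d)$ and the asymptotic boundary. For the limits of $h(d)$ I would use the substitution $w = d\sin\psi$: as $d \to +\infty$ the interval $(0,\arcsin(1/d))$ shrinks to $0$ where $\cos\psi \to 1$, so $h(d) \to \sqrt{1 + 4\tau^2}\int_0^1 dw/\sqrt{1 - w^2} = \frac{\pi}{2}\sqrt{1 + 4\tau^2}$; as $d \to 1^{+}$ the upper limit tends to $\pi/2$ and the integrand behaves like $1/\cos\psi$ near $\psi = \pi/2$, which is non-integrable, whence $h(d) \to +\infty$. Finally, letting $\theta \to 0^{+}$ (approach to the boundary ray $\{(x,0): x > s\}$ of $\partial_\infty\mathbb{H}^2$) gives $u_d^{\pm}(\theta) \to \pm(h(d) - 2\tau\arcsin(1/d))$, so the two sheets limit to the two horizontal arcs at those heights over $\{x \geq s\}$; the vertical-tangent gluing and the behavior at the endpoints $x = s$ and $x \to +\infty$ supply the connecting vertical segments, giving the stated asymptotic boundary.
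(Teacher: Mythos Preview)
Your proposal is correct and follows essentially the same route as the paper: the paper simply cites \cite{P1,Y} for the fact that $u_d^{\pm}$ are the invariant minimal graphs (which you re-derive in detail, including the gauge shift $\tilde u = u + 2\tau\theta$), then observes the gluing along the vertical-tangent equidistant, and computes the two limits of $h(d)$ via the same substitution $w = d\sin\psi$ (written as $v = d\sin\psi - 1$ for $d\to\infty$). The only cosmetic difference is that for $d\to 1^{+}$ the paper bounds $h(d)$ below by $\int_0^1 \frac{dw}{\sqrt{d^2-w^2}\sqrt{1-w^2}}$ and lets $d\to 1$, whereas you argue directly that the limiting integrand $\sim 1/\cos\psi$ is non-integrable near $\pi/2$; both are fine.
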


\begin{proof}
The functions $u_{d}^{\pm}$ define minimal graphs invariant by hyperbolic translations, \cite{P1,Y}. We have $u_{d}^{-}\bigl(\arcsin (1/d)\bigr) = u_{d}^{+}\bigl(\arcsin (1/d)\bigr)$, and over the equidistant curve $\gamma_{s,\arcsin (1/d)}$ the graphs of $u_{d}^{-}$ and $u_{d}^{+}$ have vertical tangent planes. Thus the union of these graphs gives rise to a complete minimal surface $\mathcal{M}_{h}(s)$ whose projection on $\mathbb{H}^{2}$ is $\Omega_{s,d}$.

Now let us study the behavior of $h(d)$ when $d$ goes to $+\infty$ or to $1$. Making a change the variables $v = d\sin\psi - 1$ we obtain
$$h(d) = \int_{-1}^{0}\frac{\sqrt{d^{2}(1 + 4\tau^{2}) - 4\tau^{2}(v + 1)^{2}}}{\sqrt{[d^{2} - (v + 1)^{2}][1 - (v + 1)^{2}]}}dv,$$
thus $\displaystyle\lim_{d \to +\infty} h(d) = \frac{\pi}{2}\sqrt{1 + 4\tau^{2}}$.

Now make the change of variables $w = d\sin\psi$. When $d > 1$, we have
\begin{eqnarray*}
h(d) &=& \int_{0}^{\arcsin (\frac{1}{d})}\frac{d\sqrt{1 + 4\tau^{2}\cos^{2}\psi}}{\sqrt{1 - d^{2}\sin^{2}\psi}}d\psi\\
&\geq & \int_{0}^{\arcsin (\frac{1}{d})}\frac{1}{\sqrt{1 - d^{2}\sin^{2}\psi}}d\psi\\
&=& \int_{0}^{1}\frac{1}{\sqrt{d^{2} - w^{2}}\sqrt{1 - w^{2}}}dw.
\end{eqnarray*}
But,
$$\lim_{d \to 1^{+}}\int_{0}^{1}\frac{1}{\sqrt{d^{2} - w^{2}}\sqrt{1 - w^{2}}}dw = \int_{0}^{1}\frac{1}{1 - w^{2}}dw = +\infty,$$
so $\displaystyle\lim_{d \to 1^{+}} h(d) = +\infty$.
\end{proof}

\noindent
\textbf{Remark 2}: We decided to index the surfaces $\mathcal{M}_{h}(s)$ by the parameter $h$ instead of $d$, because $h$ determines the height of the surfaces. We can also define the functions of proposition $3$ on the interval $\bigl(\arcsin (1/d) + \pi/2,\pi\bigr)$, and this gives rises to similar minimal surfaces which we also denote by $\mathcal{M}_{h}(s)$. In this case, the projection in $\mathbb{H}^{2}$ is the domain with boundary $\gamma_{s,\theta}\cup\{(x,0); x < s\}$.\\

In the rest of this section, unless otherwise specified, we will work on the Half-space model.

Below we state some properties of this family of surfaces that will be useful to our work. 

\begin{lem}\label{transverse}
Let $S$ be an entire vertical graph in $\mathbb{E}(-1,\tau)$ (half-space model) such that $S \subset \mathbb{H}^{2}\times [-h_{0},h_{0}]$, for some $h_{0} > 0$, and $\langle N,\partial_{t}\rangle \geq c_{0} > 0$, for some $c_{0}$, where $N$ is the unit normal of $S$.

Then, for $h > h_{0}$ large the surface $\mathcal{M}_{h}(s)$ is transverse to $S$.
\end{lem}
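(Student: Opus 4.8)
The plan is to understand the geometry of the surfaces $\mathcal{M}_h(s)$ as $h \to +\infty$ and compare their tangent planes with those of $S$. The key observation is that the family $\{\mathcal{M}_h(s)\}$ is obtained by letting $d \to 1^+$; by Proposition~3 the height $h(d) \to +\infty$ in this limit, while the geometry of the surface near the ``core'' changes in a controlled way. I would first record what $\mathcal{M}_h(s)$ looks like: it is a bi-graph over the domain $\Omega_{s,d}$ which is a half-plane-type region bounded by the equidistant curve $\gamma_{s,\arcsin(1/d)}$ and a horocycle-like boundary arc on $\partial_\infty\mathbb{H}^2$; the two sheets $u_d^+$ and $u_d^-$ meet along the lift of $\gamma_{s,\arcsin(1/d)}$, where the tangent planes are vertical. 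The crucial point is that as $d \to 1^+$ the equidistant curve $\gamma_{s,\arcsin(1/d)}$ recedes: its distance $\mathrm{dist}(\arcsin(1/d))$ to the geodesic $\gamma_s$ stays bounded, but more importantly the set of points of $\mathcal{M}_h(s)$ that lie inside the slab $\mathbb{H}^2\times[-h_0,h_0]$ is pushed out toward the portion of $\mathcal{M}_h(s)$ where it is (after the reflection $t \mapsto -t$ symmetry is quotiented) a graph of small slope over a region far from $\gamma_{s,\arcsin(1/d)}$.

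Next I would make the transversality estimate precise. Transversality of $\mathcal{M}_h(s)$ and $S$ fails at a point $q$ only if $q$ lies on both and their tangent planes coincide, equivalently the unit normals are parallel at $q$. Since $S$ satisfies $\langle N_S,\partial_t\rangle \ge c_0 > 0$, its tangent planes are uniformly bounded away from vertical. On the other hand, on $\mathcal{M}_h(s)$ the unit normal becomes vertical exactly along $\gamma_{s,\arcsin(1/d)}$ and, by the explicit formula for the normal of a graph in Section~\ref{cmcgraph} applied to $u_d^\pm$, the angle $\langle N,\partial_t\rangle$ on the graph piece is controlled by $a,b$, hence by the derivatives $u_d'$ together with the bundle-curvature terms $2\tau\lambda_x/\lambda^2, 2\tau\lambda_y/\lambda^2$. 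Thus it suffices to show that on the part of $\mathcal{M}_h(s)$ meeting $\mathbb{H}^2\times[-h_0,h_0]$ the normal is nearly vertical when $h$ is large, so it cannot be parallel to $N_S$. The region of $\mathcal{M}_h(s)$ inside the slab is, by the monotonicity of $u_d^\pm$ in $\theta$ and the fact that $u_d^\pm$ ranges over an interval of length $\to\infty$, confined to $\theta$ close to the endpoint where $u_d^\pm \to 0$, i.e. close to $\partial_\infty\mathbb{H}^2$; and there one checks from the integrand of $u_d^\pm$ that $u_d'(\theta) \to 0$ uniformly (because the height is being produced by the part of the integral near $\arcsin(1/d)$, not near $\theta$ small). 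Hence $\langle N_{\mathcal{M}_h},\partial_t\rangle \to 1$ uniformly on $\mathcal{M}_h(s)\cap(\mathbb{H}^2\times[-h_0,h_0])$, so for $h$ large it exceeds $\sqrt{1-c_0^2}$... more directly, it is so close to $1$ that the two normals at any common point cannot be parallel, giving transversality.

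I would organize the argument as: (i) fix $h_0$ and $c_0$; (ii) show $\mathcal{M}_h(s)\cap(\mathbb{H}^2\times[-h_0,h_0])$ is contained in the graph piece over a subdomain $\Omega'\subset\Omega_{s,d}$ on which $\theta$ is bounded away from $\arcsin(1/d)$, using that $|u_d^\pm| \le h_0$ forces $\theta$ into a shrinking interval near the free endpoint as $d\to 1^+$; (iii) estimate $|u_d'(\theta)|$ on $\Omega'$ from the integral formula and conclude it tends to $0$ as $d\to 1^+$ (equivalently $h\to\infty$), uniformly on $\Omega'$; (iv) plug into the normal formula $N = W^{-1}(aE_1+bE_2+E_3)$ to get $\langle N,\partial_t\rangle = 1/W \to 1$ on the slab part of $\mathcal{M}_h(s)$; (v) conclude that at any point of $S\cap\mathcal{M}_h(s)$ (which necessarily lies in the slab), the normals $N_S$ and $N_{\mathcal{M}_h}$ satisfy $\langle N_S,\partial_t\rangle \ge c_0$ while $\langle N_{\mathcal{M}_h},\partial_t\rangle$ is close to $1$ but the \emph{horizontal} parts are forced to differ, so the tangent planes are distinct, i.e. the intersection is transverse. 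The main obstacle I anticipate is step (iii): one must extract from the explicit but delicate integral $u_d^+(\theta)=\int_\theta^{\arcsin(1/d)}\frac{d\sqrt{1+4\tau^2\cos^2\psi}}{\sqrt{1-d^2\sin^2\psi}}\,d\psi - 2\tau(\arcsin(1/d)+\theta)$ a uniform smallness of $u_d'$ away from the singular endpoint while simultaneously knowing the full integral blows up, and to convert ``$|u_d^\pm|\le h_0$'' into a genuine lower bound on how far $\theta$ is from $\arcsin(1/d)$; care is also needed with the $2\tau$ terms, which do not vanish but are bounded and hence harmless for the normal once they are weighed against a large $W$ only through the horizontal components.
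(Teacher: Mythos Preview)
Your proposal has the geometry of $\mathcal{M}_h(s)$ inside the slab exactly backwards, and this breaks steps (ii)--(v). The function $u_d^+$ equals $-4\tau\arcsin(1/d)$ at $\theta=\arcsin(1/d)$ (a bounded value) and increases monotonically to $h(d)-2\tau\arcsin(1/d)$ as $\theta\to 0^+$. Since $h(d)\to+\infty$ as $d\to 1^+$, the constraint $|u_d^\pm(\theta)|\le h_0$ forces $\theta$ to be \emph{close to} $\arcsin(1/d)$, not bounded away from it as you assert in (ii). Near $\theta=\arcsin(1/d)$ the derivative
\[
(u_d^+)'(\theta)=-\frac{d\sqrt{1+4\tau^2\cos^2\theta}}{\sqrt{1-d^2\sin^2\theta}}-2\tau
\]
blows up, so step (iii) fails outright: $|u_d'|$ is never small on $(0,\arcsin(1/d))$ (the main term is bounded below by $d\ge 1$ everywhere). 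The paper computes directly that
\[
\langle N_d^+,\partial_t\rangle=\frac{\sqrt{1-d^2\sin^2\theta}}{\sqrt{1+4\tau^2\cos^2\theta}},
\]
which tends to $0$, not $1$, on the slab portion.

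This reversal is precisely what makes the argument work, and it is what the paper does: as $h\to\infty$ the normal of $\mathcal{M}_h(s)$ on $\mathbb{H}^2\times[-h_0,h_0]$ becomes nearly \emph{horizontal}, so $\mathcal{M}_h(s)$ is close to a vertical plane there. Since $\langle N_S,\partial_t\rangle\ge c_0>0$, the two unit normals at any common point satisfy $\langle N_{\mathcal{M}_h},\partial_t\rangle<c_0\le\langle N_S,\partial_t\rangle$ and hence cannot be parallel. Your step (v), by contrast, would have both normals close to $\partial_t$, and your appeal to ``the horizontal parts are forced to differ'' is unsupported: two nearly vertical unit vectors can certainly coincide. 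The fix is simply to invert the direction of your estimate in (ii)--(iv).
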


\begin{proof}
We claim that as $h \to +\infty$ the unit normal of $\mathcal{M}_{h}(s)$ on the part inside $\mathbb{H}^{2}\times [-h_{0},h_{0}]$ tends to a horizontal vector. Observe that $h \to +\infty$ when $d \to 1^+$. We will prove the result for the graph of $u_{d}^{+}$, the case $u_{d}^{-}$ is analogous. The function $u_{d}^{+}$ only depends on $\theta$, so the unit normal is given by 
$$N_{d}^{+} = \frac{\Bigl(\frac{\partial u_{d}^{+}}{\partial\theta} \sin^{2}\theta + 2\tau\Bigr)E_{1} - \Bigl(\frac{\partial u_{d}^{+}}{\partial\theta} \sin\theta \cos\theta\Bigr)E_{2} + E_{3}}{\sqrt{1 + 4\tau^{2} + 4\tau\Bigl(\frac{\partial u_{d}^{+}}{\partial\theta}\Bigr)\sin^{2}\theta +\Bigl(\frac{\partial u_{d}^{+}}{\partial\theta}\Bigr)^{2}\sin^{2}\theta}},$$
hence 
$$\langle N_{d}^{+}, \partial_{t}\rangle = \frac{\sqrt{1 - d^{2}\sin^{2}\theta}}{\sqrt{1 + 4\tau^{2}\cos^{2}\theta}}.$$

Define $c_{\tau} = \min\{-2\tau\pi,2\tau\pi\}$. Given $\varepsilon > 0$ choose $\delta > 0$ sufficiently small such that, for $d \in (1, 1 + \delta)$ there is $\theta(d)$ with $u_{d}\bigr(\theta(d)\bigl) = h_{0}$ and $$\frac{4(2\delta + \delta^{2})e^{2(h_{0} - c_{\tau})}}{\bigl(2 + \delta(1 + e^{2(h_{0} - c_{\tau})})\bigr)^{2}} < \varepsilon^{2}.$$

Denote $\theta(\delta) = \arcsin (\frac{1}{1 + \delta})$. Since $1 < d < 1 + \delta,$ we have
\begin{eqnarray*}
h_{0} &=& \int_{\theta(d)}^{\arcsin (\frac{1}{d})}\frac{d\sqrt{1 + 4\tau^{2}\cos^{2}\psi}}{\sqrt{1 - d^{2}\sin^{2}\psi}}d\psi - 2\tau\bigl(\arcsin (1/d) + \theta\bigr) \\\\
&>& \int_{\theta(d)}^{\arcsin (\frac{1}{1 + \delta})}\frac{1}{\sqrt{1 - \sin^{2}\psi}}d\psi + c_{\tau}\\\\
&=& \log \Biggl(\frac{\bigl[\cos(\frac{\theta(\delta)}{2}) + \sin(\frac{\theta(\delta)}{2})\bigr]\bigl[\cos(\frac{\theta(d)}{2}) - \sin(\frac{\theta(d)}{2})\bigr]}{\bigl[\cos(\frac{\theta(\delta)}{2}) - \sin(\frac{\theta(\delta)}{2})\bigr]\bigl[\cos(\frac{\theta(d)}{2}) + \sin(\frac{\theta(d)}{2})\bigr]}\Biggr) + c_{\tau},
\end{eqnarray*}

After some algebraic manipulation we obtain
\begin{eqnarray*}
\sqrt{1 - \sin^{2}\bigl(\theta(d)\bigr)} < \frac{\sqrt{4(2\delta + \delta^{2})e^{2(h_{0} - c_{\tau})}}}{2 + \delta(1 + e^{2(h_{0} - c_{\tau})})} < \varepsilon.
\end{eqnarray*}

But $\theta > \theta(d)$ implies $u_{d}^{+}(\theta) < h_{0}$ and $$\frac{\sqrt{1 - d^{2}\sin^{2}\theta}}{\sqrt{1 + 4\tau^{2}\cos^{2}\theta}} < \sqrt{1 - d^{2}\sin^{2}\theta} < \sqrt{1 - \sin^{2}\bigl(\theta(d)\bigr)}.$$

Thus for $h$ large, $\mathcal{M}_{h}(s)$ is close to a vertical plane inside the slab. Moreover $S$ has tangent planes bounded away from the vertical. Therefore these surfaces are transverse. 
\end{proof}

Observe that the surface $\mathcal{M}_{h}(s)$ is parametrized by the two charts $\Psi^{\pm}(\phi,\theta) = \bigl(e^{\phi}\cos{\theta} + s,e^{\phi}\sin{\theta},u_{d}^{\pm}(\theta)\bigr)$. Consider the curves $\alpha^{\pm}_{\phi} = \Psi^{\pm}(\phi,\cdot)$ and $\beta^{\pm}_{\theta} = \Psi^{\pm}(\cdot,\theta)$. Both families $\{\alpha^{+}_{\phi}\cup\alpha^{-}_{\phi}\}$ and $\{\beta^{+}_{\theta}\cup\beta^{-}_{\theta}\}$ foliate $\mathcal{M}_{h}(s)$. The unit tangent vectors of $\alpha^{\pm}_{\phi}$ and $\beta^{\pm}_{\theta}$ are given respectively by $$V^{\pm} = \frac{-E_{1} + (\cot\theta)E_{2} + \bigl(2\tau + \frac{d u_{d}^{\pm}}{d\theta}\bigr)E_{3}}{\sqrt{1 + \cot^{2}\theta + \bigl(2\tau + \frac{d u_{d}^{\pm}}{d\theta}\bigr)^{2}}}, \ W^{\pm} = \frac{(\cot\theta)E_{1} + E_{2} - (2\tau\cot\theta)E_{3}}{\sqrt{1 + (1+ 4\tau^{2})\cot^{2}\theta}},$$
so, $$\langle V^{\pm},E_{3}\rangle = \frac{\mp d\sqrt{1 + 4\tau^{2}\cos^{2}\theta}}{\sqrt{(1 + \cot^{2}\theta)(1 -d^{2}\sin^{2}\theta) + d^2(1 + 4\tau^{2}\cos^{2}\theta)}},$$ $$\langle W^{\pm},E_{3}\rangle = \frac{-2\tau\cot\theta}{\sqrt{1 + (1+ 4\tau^{2})\cot^{2}\theta}}.$$

Thus, for $d$ close to $1$ and $\theta$ close to $\pi/2$ we have that $V^{\pm}$ is almost vertical and $W^{\pm}$ is almost horizontal. Hence, $\mathcal{F} := \{\alpha_{\phi} := \alpha^{+}_{\phi}\cup\alpha^{-}_{\phi}\}\cap\bigl(\mathbb{H}^{2}\times [-h_{0},h_{0}]\bigr)$ is a foliation of this slab by compact curves which are almost vertical. 

\begin{prop}\label{separation}
Let $h$, $h_0$ and $S$ as in lemma \ref{transverse}. Then $\mathcal{M}_{h}(s)$ separates $S$ in two connected components.
\end{prop}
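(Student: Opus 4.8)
The plan is to push everything down onto the graph $S$. Since $S$ is an entire vertical graph, $\Pi|_{S}\colon S\to\mathbb{H}^{2}$ is a diffeomorphism; let $u_{S}\colon\mathbb{H}^{2}\to[-h_{0},h_{0}]$ be the function with $S=\Sigma(u_{S})$ and set $g:=u_{S}\circ\Pi-t$ on $\mathcal{M}_{h}(s)$, where $t$ is the height. Then $C:=S\cap\mathcal{M}_{h}(s)=g^{-1}(0)$; by Lemma~\ref{transverse} the two surfaces are transverse, so $0$ is a regular value of $g$ and $C$ is a $1$-submanifold of $\mathcal{M}_{h}(s)$, properly embedded (it is closed in $\mathcal{M}_{h}(s)$, which is itself a closed subset of $\mathbb{E}(-1,\tau)$), hence a properly embedded $1$-submanifold of $S$ as well. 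Because $S\subset\mathbb{H}^{2}\times[-h_{0},h_{0}]$ we have $C\subset M':=\mathcal{M}_{h}(s)\cap\bigl(\mathbb{H}^{2}\times[-h_{0},h_{0}]\bigr)$, which is foliated by the family $\mathcal{F}=\{\tilde\alpha_{\phi}:=\alpha_{\phi}\cap(\mathbb{H}^{2}\times[-h_{0},h_{0}])\}_{\phi\in\mathbb{R}}$ of compact, almost-vertical curves described above. The whole problem then reduces to proving that $C$ is \emph{connected}: since $\Pi|_{S}$ identifies $S$ with $\mathbb{H}^{2}\cong\mathbb{R}^{2}$, a connected properly embedded $1$-submanifold of it is a circle or a properly embedded line, and in either case its complement in $S$ has exactly two components (the Jordan curve theorem, applied in the line case to the one-point compactification, to which a proper embedding of $\mathbb{R}$ extends as a Jordan curve through $\infty$); in particular this already forces $C\neq\emptyset$.

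The heart of the argument is the claim that \emph{$C$ meets each leaf $\tilde\alpha_{\phi}$ of $\mathcal{F}$ in exactly one point, transversally}. Orient $\tilde\alpha_{\phi}$ by increasing height. The functions $u_{d}^{\pm}$ are monotone in $\theta$, so the arch $\alpha_{\phi}$ is strictly monotone in $t$ and $\tilde\alpha_{\phi}$ is a single subarc joining a point at height $-h_{0}$ to a point at height $h_{0}$. As $|u_{S}|\le h_{0}$, we get $g=u_{S}\circ\Pi+h_{0}\ge 0$ at the lower endpoint and $g=u_{S}\circ\Pi-h_{0}\le 0$ at the upper one, so $g$ changes sign along $\tilde\alpha_{\phi}$; it then suffices to show that $g$ is \emph{strictly monotone} there. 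Parametrizing $\tilde\alpha_{\phi}$ by arclength $\sigma$ with upward unit tangent $T$,
\[
\frac{d}{d\sigma}\,(g\circ\tilde\alpha_{\phi})=\bigl\langle\nabla^{\mathbb{H}^{2}}u_{S},\,d\Pi(T)\bigr\rangle-dt(T).
\]
The hypothesis $\langle N_{S},\xi\rangle\ge c_{0}>0$ gives a uniform bound $|\nabla^{\mathbb{H}^{2}}u_{S}|\le K=K(c_{0},\tau)$ (from the formula for the normal of a graph in Subsection~\ref{cmcgraph}: in the half-space model $a=2\tau-y\,\partial_{x}u_{S}$, $b=-y\,\partial_{y}u_{S}$ with $a^{2}+b^{2}=W^{2}-1\le c_{0}^{-2}-1$, while $|\nabla^{\mathbb{H}^{2}}u_{S}|^{2}=y^{2}(\partial_{x}u_{S})^{2}+y^{2}(\partial_{y}u_{S})^{2}$). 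On the other hand, for $h$ large every leaf of $\mathcal{F}$ is so close to vertical that $|d\Pi(T)|_{\mathbb{H}^{2}}<\varepsilon$ and $dt(T)>1-\varepsilon$ along all of it (use $T\approx\xi$, $d\Pi(\xi)=0$, $dt(\xi)=1$). Hence $\frac{d}{d\sigma}(g\circ\tilde\alpha_{\phi})<K\varepsilon-(1-\varepsilon)<0$ once $\varepsilon$ is small, which proves the claim. I expect this to be the only genuine obstacle: it needs the leaves of $\mathcal{F}$ to be uniformly almost-vertical along their \emph{entire} portion inside the slab (not merely near the fold) — exactly the content of the foliation remark preceding the statement — and it is the single place the slope bound on $S$ enters.

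To finish, realize $M'$ as the region of $\mathbb{R}_{\phi}\times\mathbb{R}$ between the graphs of two functions of $\phi$, so that the leaves of $\mathcal{F}$ are its vertical fibres. By the claim, $C$ is the graph of a map $\phi\mapsto r^{*}(\phi)$, continuous — indeed smooth, by transversality and the implicit function theorem; hence $C$ is connected, a single properly embedded line in $S$. By the reduction in the first paragraph, $S\setminus C=S\setminus\mathcal{M}_{h}(s)$ has exactly two connected components, i.e.\ $\mathcal{M}_{h}(s)$ separates $S$ into two pieces. (Alternatively, the ``at least two'' half is immediate: $S$ crosses $\mathcal{M}_{h}(s)$ transversally along $C\neq\emptyset$, and $\mathbb{E}(-1,\tau)\setminus\mathcal{M}_{h}(s)$ has two components since $\mathcal{M}_{h}(s)$ is a two-sided properly embedded surface homeomorphic to $\mathbb{R}^{2}$ in the simply connected $\mathbb{E}(-1,\tau)$.)
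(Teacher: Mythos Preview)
Your proof is correct and reaches the same intermediate goal as the paper—showing that $C=S\cap\mathcal{M}_{h}(s)$ meets each leaf $\tilde\alpha_{\phi}$ of $\mathcal{F}$ in exactly one point—but the tactics differ. The paper argues \emph{geometrically and by contradiction}: for ``at most one'' it takes two consecutive intersection points on a leaf, looks at the compact subarc $\tilde C\subset C$ between them, slides the foliation until some $\alpha_{\tilde\phi}$ is tangent to $\tilde C$, and contradicts the slope disparity (leaves almost vertical, $\tilde C\subset S$ bounded away from vertical); for ``at least one'' it notes that a component of $C$ missing some leaf is trapped on one side and, being proper in the slab, must then hit another leaf twice. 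Uniqueness of the connected component is obtained by a separate argument: two components $C_{1},C_{2}$ would allow one to build a closed loop crossing the separating graph $S$ exactly once. Your approach is \emph{analytic}: you introduce the scalar function $g=u_{S}\circ\Pi-t$ on $\mathcal{M}_{h}(s)$, estimate $dg(T)=\langle\nabla^{\mathbb{H}^{2}}u_{S},d\Pi(T)\rangle-dt(T)$ along each leaf using the slope bound on $S$ and the uniform almost-verticality of the leaves, conclude strict monotonicity of $g$ on every $\tilde\alpha_{\phi}$, and invoke the intermediate value theorem. This yields ``exactly one'' in a single stroke and makes connectedness of $C$ immediate (it is the graph of $\phi\mapsto r^{*}(\phi)$), so the paper's separate uniqueness-of-component step is unnecessary in your version. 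Both proofs feed on the same two ingredients; yours is more quantitative and self-contained, while the paper's tangency argument would transplant more easily to situations where $S$ is not given explicitly as a graph.
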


\begin{proof}
Since the surfaces are transverse their intersection is a union of curves. Let $C$ be a connected component of $\mathcal{M}_{h}(s)\cap S$. As $S$ is proper ($S$ is an entire graph), $C\cap\alpha_{\phi}$ consists of a finite number of points. We will first show that $C$ meets a curve $\alpha_{\phi} \in \mathcal{F}$ in exactly one point.

Suppose $C\cap\alpha_{\phi_0}$ has more than one point. Since $\alpha_{\phi}$ is properly embedded, it has an injective parametrization $\alpha(t)$, $t \in (-\infty,+\infty)$. Let $t_1,t_2,$ such that $\alpha(t_1)$ and $\alpha(t_2)$ are consecutive points in $C\cap\alpha_{\phi}$. Denote $\tilde{C}$ the compact part of $C$ joining the points $\alpha(t_1)$ and $\alpha(t_2)$. Considering the foliation $\mathcal{F}$ of $M_{h}(s)$, since $\tilde{C}$ is a compact, there is a $\tilde\phi$ such that $\alpha_{\tilde\phi}$ is tangent to $\tilde{C}$ at a single point. But this is a contradiction with the fact that $\alpha_{\tilde\phi}$ has tangent line close to the vertical, and $\tilde{C}$ has tangent line away from the vertical (this arc is inside the graph).

Now suppose $C$ does not intersects some $\alpha_{\phi_{0}}$. Thus $C$ is contained in one of the components of $\mathcal{M}_{h}(s)\backslash \alpha_{\phi_{0}}$. As $C$ is proper and is inside the slab it has to intersect some $\alpha_{\bar\phi}$ in more than one point, but this contradicts the last paragraph. So $C$ meets each of the curves in $\mathcal{F}$ and does this in exactly one point. Therefore $C$ separates $S$ in two connected components.

Finally, we will show that $S\cap M_{h}(s)$ has exactly one connected component. Suppose this is not true, and let $C_{1}$ and $C_2$ two distinct connected components of this intersection. A curve $\alpha_{\phi}$ meet each $C_i$ transversely in exactly one point. The graph $S$ separates $\mathbb{E}(-1,\tau)$ in two connected components $U_1$ and $U_2$, such that $U_1$ is below $S$ and $U_2$ is above $S$. So, we can parametrize $\alpha_{\phi}$ as $\alpha(t)$, $t \in (-\infty,+\infty)$, in such a way that if $\alpha(r_i) \in \alpha_{\phi}\cap C_{i}$, then $\alpha(t) \in U_2$ for $t > r_i$, and $\alpha(t) \in U_1$ for $t < r_i$. Without loss of generality, suppose $r_2 > r_1$. It follows from the description that we can find $q_1 = \alpha(\bar{t})$, $q_{2} = \alpha(t^{*})$ in $U_2$ such that $\bar{t} \in (r_2,+\infty)$ and $t^{*} \in (r_{1},r_{2})$. Thus there is a path $\sigma$ joining $q_1,q_{2}$ that does not intersect $S$. The path $\sigma\cup\alpha|_{[\bar{t},t^{*}]}$ defines a closed curve which intersects $S$ in only one point. Since $S$ separates the ambient space, a closed curve which crosses $S$, intersects $S$ in a even number of points, this gives a contradiction.

\end{proof}

For each $s > 0$ consider the isometry of $\mathbb{H}^{2}$ given by $g_{s}(z) = s^{2}/\bar{z} - s/2$. This isometry sends the geodesic $\gamma_{s} = \{(s,t); t > 0\}$ to the geodesic joining the points $z = -s/2$ and $z = s/2$ at the asymptotic boundary of $\mathbb{H}^{2}$. We have $g_{s}(z) = -\overline{f_{s}}$, where $f_{s}$ is a positive isometry. Let $L_{s}$ be the isometry of $\mathbb{E}(-1,\tau)$ generated by $f_{s}$.

\begin{lem}[\cite{F.P}]
The argument of $f_{s}$ is $$\arg(f'_{s})(x + iy) = \arctan\biggl(\frac{-2xy}{x^2 - y^2}\biggr).$$ The image of $\gamma_{s,\theta}\times\{0\}$ under $L_{s}$ is an arc joining the points $(-s/2,0,-4\tau\theta_0)$ and $(s/2,0,0)$ whose projection on $\mathbb{H}^{2}$ is the equidistant $\beta_{s,\theta}$ to the geodesic joining the points $z = -s/2$ and $z = s/2$ of the asymptotic boundary, which makes an angle $\theta$ with the positive semi-axis at the point $z = s/2$.
\end{lem}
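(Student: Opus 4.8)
The plan is to first make $f_{s}$ explicit, then identify $L_{s}$ through the classification of isometries of $\mathbb{E}(-1,\tau)$ in the half-space model recalled above, and finally track the $\mathbb{H}^{2}$-projection and the $t$-coordinate of the image arc separately.

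From $g_{s}(z)=-\overline{f_{s}(z)}$ and $g_{s}(z)=s^{2}/\bar z-s/2$ one solves $f_{s}(z)=-\overline{g_{s}(z)}=s/2-s^{2}/z$; as a M\"obius map its associated matrix has positive determinant $s^{2}$, so it is a positive isometry of the half-plane model, as required. Then $f_{s}'(z)=s^{2}/z^{2}$, hence $f_{s}'(z)/|f_{s}'(z)|=\bar z^{2}/|z|^{2}=e^{-2i\arg z}$, that is $\arg(f_{s}')(z)=-2\arg z$. Writing $z=x+iy$ and using $\tan(2\arg z)=2xy/(x^{2}-y^{2})$ turns this into $\arg(f_{s}')(x+iy)=\arctan\bigl(-2xy/(x^{2}-y^{2})\bigr)$, where $\arg z\in(0,\pi)$ on $\mathbb{H}^{2}$, so the right-hand side is to be read with the continuous branch this choice determines. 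This is the first assertion, and it is exactly here that care is needed, since $\arctan$ by itself only recovers the value modulo $\pi$.

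Since $g_{s}=-\overline{f_{s}}$ is orientation-reversing, the isometry $L_{s}$ generated by $f_{s}$ is of the $G$-type, $L_{s}(z,t)=\bigl(g_{s}(z),\,-t+2\tau\arg(f_{s}')(z)+c\bigr)$, and we take $c=0$; this is precisely the normalization that puts one endpoint of the image arc at $(s/2,0,0)$. On the slice $\{t=0\}$ this reads $L_{s}(x,y,0)=\bigl(g_{s}(z),\,-4\tau\arg z\bigr)$. For the $\mathbb{H}^{2}$-part: $g_{s}$ carries the ideal endpoints $(s,0)$ and $\infty$ of the geodesic $\gamma_{s}$ to $s/2$ and $-s/2$, so it carries $\gamma_{s}$ to the geodesic joining $-s/2$ and $s/2$; being an isometry it carries the equidistant $\gamma_{s,\theta}$ to an equidistant $\beta_{s,\theta}$ of that geodesic at the same distance, and a first-order expansion of $g_{s}$ at $z=s$ (where it maps the boundary ray $\{x>s\}$ onto the segment $(-s/2,s/2)$ and reverses orientation) shows that $\beta_{s,\theta}$ meets $\partial_{\infty}\mathbb{H}^{2}$ at $z=s/2$ at the asserted angle $\theta$.

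Finally, the $t$-coordinates of the two ideal endpoints of the image arc are the limits of $-4\tau\arg z$ along $\gamma_{s,\theta}$, which one reads off from the parametrization $z=s+e^{\phi}e^{i\theta}$: as $\phi\to-\infty$ we have $z\to s$, hence $\arg z\to 0$ and the endpoint is $(s/2,0,0)$; as $\phi\to+\infty$ we have $z/e^{\phi}\to e^{i\theta}$, hence $\arg z\to\theta=\theta_{0}$ and the endpoint is $(-s/2,0,-4\tau\theta_{0})$. The main obstacle throughout is bookkeeping rather than substance: one must hold the chosen branch of $\arg(f_{s}')$ fixed (a different admissible branch would shift both endpoints by a common multiple of $4\pi\tau$), and one must feed the orientation reversal of $g_{s}$ correctly into the angle computation, so as to land on $\theta$ rather than on its supplement.
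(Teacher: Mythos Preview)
The paper does not prove this lemma; it is quoted from \cite{F.P} and stated without argument, so there is no proof in the paper to compare against. Your proof is correct: you recover $f_{s}(z)=s/2-s^{2}/z$ from $g_{s}=-\overline{f_{s}}$, compute $f_{s}'(z)=s^{2}/z^{2}$ and hence $\arg(f_{s}')=-2\arg z$, correctly identify $L_{s}$ as the $G$-type lift $(z,t)\mapsto(g_{s}(z),-t+2\tau\arg(f_{s}')(z))$ with $c=0$, and then read off the $\mathbb{H}^{2}$-projection and the endpoint $t$-values from the parametrization $z=s+e^{\phi}e^{i\theta}$ of $\gamma_{s,\theta}$. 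Your remarks on the branch of $\arctan$ and on the orientation reversal of $g_{s}$ at $z=s$ are exactly the points one has to watch.
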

\begin{prop}[\cite{F.P}]
The image of $\mathcal{M}_{h}(s)$ under $L_{s}$ is a minimal surface which projects on the domain bounded by the curve $\beta_{s,\theta}\cup\{(x,0); -s/2 \leq x \leq s/2\}$, and whose asymptotic boundary is the union of
$$\{(x,0,t); t = -h + 2\tau\arcsin (1/d) \ \textrm{or} \ t = h - 2\tau\arcsin (1/d), -s/2 \leq x \leq s/2\},$$
and the vertical segments joining the end points of these arcs.
\end{prop}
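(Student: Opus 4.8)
The plan is to exploit that $L_{s}$ is a global isometry of $\mathbb{E}(-1,\tau)$, so that $L_{s}(\mathcal{M}_{h}(s))$ is automatically a complete minimal surface and it only remains to identify its projection and its asymptotic boundary. By the classification of isometries, $L_{s}$ (the orientation-reversing one lying over $g_{s}=-\overline{f_{s}}$) is given in the half-space model by $L_{s}(z,t)=\bigl(g_{s}(z),\,-t+2\tau\arg f'_{s}(z)+c\bigr)$, with $c$ the constant already fixed by the normalization made in the preceding lemma; tracing that computation back gives $c=0$. Since every isometry of $\mathbb{E}(-1,\tau)$ is fiber preserving, $\Pi\circ L_{s}=g_{s}\circ\Pi$, so the projection of $L_{s}(\mathcal{M}_{h}(s))$ equals $g_{s}(\Omega_{s,d})$. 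Now $g_{s}(z)=s^{2}/\overline{z}-s/2$ preserves the real axis, sends the ideal points $s$ and $\infty$ to $s/2$ and $-s/2$, is strictly decreasing on $(s,+\infty)$ with image $(-s/2,s/2)$, and sends the geodesic $\gamma_{s}$ to the geodesic joining $-s/2$ and $s/2$; hence it carries the ideal arc $\{(x,0);\,x>s\}$ onto $\{(x,0);\,-s/2<x<s/2\}$ and the equidistant $\gamma_{s,\theta}$ (with $\theta=\arcsin(1/d)$) to the equidistant $\beta_{s,\theta}$ of the preceding lemma. Since $\partial\Omega_{s,d}=\gamma_{s,\theta}\cup\{(x,0);\,x>s\}$, the image domain is exactly the one bounded by $\beta_{s,\theta}\cup\{(x,0);\,-s/2\le x\le s/2\}$.

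For the asymptotic boundary I would use that $L_{s}$ extends to a homeomorphism of $\mathbb{E}(-1,\tau)$ together with its ideal boundary, so that $\partial_{\infty}\bigl(L_{s}(\mathcal{M}_{h}(s))\bigr)=L_{s}\bigl(\partial_{\infty}\mathcal{M}_{h}(s)\bigr)$, and then transport the description of $\partial_{\infty}\mathcal{M}_{h}(s)$ recalled above. Along each of the two horizontal ideal arcs one has $y=0$, so the twisting term $\arg f'_{s}(x+iy)=\arctan\!\bigl(-2xy/(x^{2}-y^{2})\bigr)$ vanishes and $L_{s}$ acts on the height simply by $t\mapsto -t$; hence the arc at height $h-2\tau\arcsin(1/d)$ goes to the arc over $\{-s/2\le x\le s/2\}$ at height $-h+2\tau\arcsin(1/d)$, and symmetrically for the other one. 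The two vertical ideal segments of $\partial_{\infty}\mathcal{M}_{h}(s)$, lying over $x=s$ and over $x=\infty$, sit inside ideal fibers, which $L_{s}$ maps homeomorphically onto the ideal fibers over $s/2$ and over $-s/2$; since a homeomorphism of a line takes a segment to the segment between the images of its endpoints, and those endpoints are precisely the endpoints of the two horizontal arcs, they are carried to the vertical segments joining the endpoints of the transported arcs. Combined with the projection computation, this identifies $L_{s}(\mathcal{M}_{h}(s))$ with the surface in the statement.

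I expect the main obstacle to be the bookkeeping at the ideal boundary. One has to verify that $L_{s}$ genuinely extends continuously there — which requires some care with the $\tau$-dependent angle term $\arg f'_{s}$, multivalued at the ideal point $z=\infty$ — and one has to pin down the additive constant $c$ so that the two transported arcs land exactly at heights $\pm\bigl(h-2\tau\arcsin(1/d)\bigr)$; both are handled by appealing to the normalization of $L_{s}$ already fixed in the preceding lemma. Everything else is a routine transport of the structure of $\mathcal{M}_{h}(s)$ through the isometry $L_{s}$.
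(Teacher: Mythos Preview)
Your proposal is correct and is precisely the direct computation the paper has in mind: the paper's own proof consists of the single sentence ``Follows by direct calculations.'' Your write-up simply makes those calculations explicit---identifying the projection via $\Pi\circ L_{s}=g_{s}\circ\Pi$, checking that the twisting term $\arg f'_{s}$ vanishes on the real axis so that $L_{s}$ acts on the ideal horizontal arcs by $t\mapsto -t$, and reading off the constant $c=0$ from the preceding lemma---and there is nothing to add.
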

\begin{proof}
Follows by direct calculations.
\end{proof}

\noindent
{\bf Remark 3}: Let $S$ be an entire vertical graph in $\mathbb{E}(-1,\tau)$ (half-space model) such that $S \subset \mathbb{H}^{2}\times [-h_{0},h_{0}]$, for some $h_{0} > 0$, and $\langle N,\partial_{t}\rangle \geq c_{0} > 0$, for some $c_{0}$, where $N$ is the unit normal of $S$. The isometry $L_{s}$ changes the height function by a bounded function. So applying proposition \ref{separation} we have that for $h > h_{0}$ large, $L_{s}\bigl(\mathcal{M}_{h}(s)\bigr)$ separates $S$ in two connected components.\label{separate}  

\begin{lem}\label{foliation}
Consider the isometries of $\mathbb{E}(-1,\tau)$ (in the half-space model) given by $F_{\lambda}(x,y,t) = (\lambda x, \lambda y, t)$, where $\lambda > 0$.
Then, for $h(d) > h_0$ large the family of surfaces $F_{\lambda}\circ L_{s}\bigl(\mathcal{M}_{h}(s)\bigr)$ foliates $\mathbb{H}^{2}\times [-h_{0},h_{0}]$.
\end{lem}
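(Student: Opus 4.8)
The plan is to realize $\{F_\lambda\circ L_s(\mathcal{M}_h(s))\}_{\lambda>0}$ as the image of a single surface under the flow of a Killing field. Writing $\lambda=e^\sigma$, the maps $F_{e^\sigma}(x,y,t)=(e^\sigma x,e^\sigma y,t)$ form a one-parameter group of isometries of $\mathbb{E}(-1,\tau)$ with infinitesimal generator $X=x\,\partial_x+y\,\partial_y$; its orbits are the Euclidean rays of $\mathbb{H}^2$ issuing from the ideal point $0\in\partial_\infty\mathbb{H}^2$, each traced at constant height, and these orbits foliate $\mathbb{E}(-1,\tau)$. Set $\mathcal{N}_1:=L_s(\mathcal{M}_h(s))$ and $\mathcal{N}_\lambda:=F_\lambda(\mathcal{N}_1)$, so $\mathcal{N}_\lambda$ is the image of $\mathcal{N}_1$ at flow-time $\log\lambda$. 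It is enough to prove that, for $h$ large:
\begin{itemize}
\item[(i)] $\mathcal{N}_1$ is transverse to $X$ at every point of $\mathcal{N}_1\cap\bigl(\mathbb{H}^2\times[-h_0,h_0]\bigr)$; and
\item[(ii)] every orbit of $X$ meeting $\mathbb{H}^2\times[-h_0,h_0]$ meets $\mathcal{N}_1$ in exactly one point.
\end{itemize}
Indeed, if two distinct $\mathcal{N}_\lambda,\mathcal{N}_\mu$ intersected, a single $X$-orbit would meet $\mathcal{N}_1$ twice (as $F_\mu F_\lambda^{-1}$ is a flow map without fixed points), contradicting (ii); so the $\mathcal{N}_\lambda$ are pairwise disjoint, and by (ii) they cover $\mathbb{H}^2\times[-h_0,h_0]$. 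Finally (i) makes $q\mapsto\lambda(q)$ — the flow-time from $q$ to $\mathcal{N}_1$ — a smooth submersion onto $(0,+\infty)$ whose level sets are the $\mathcal{N}_\lambda$, so these foliate the slab.

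Let $\gamma\subset\mathbb{H}^2$ be the geodesic with ideal endpoints $\pm s/2$ (a Euclidean semicircle centered at the origin), so $\Pi^{-1}(\gamma)=L_s(\Pi^{-1}(\gamma_s))$, and more generally let $\gamma_\lambda$ be the geodesic with ideal endpoints $\pm\lambda s/2$, so $\Pi^{-1}(\gamma_\lambda)=F_\lambda\circ L_s(\Pi^{-1}(\gamma_s))$. By the structure of $\mathcal{M}_h(s)$ as a bigraph over $\Omega_{s,d}$ with fold curve over $\gamma_{s,\arcsin(1/d)}$, together with the estimates in the proof of Lemma \ref{transverse}, for $d\to1^+$ the part of $\mathcal{M}_h(s)$ in any fixed slab $\mathbb{H}^2\times[-h_0,h_0]$ has unit normal arbitrarily close to horizontal and projects onto an arbitrarily thin band lying on one side of $\gamma_s$, over which it is a bigraph whose two sheets attain every height in $[-h_0,h_0]$, the fold projecting onto $\gamma_{s,\arcsin(1/d)}\to\gamma_s$. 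Since $L_s$ changes heights only by a bounded function (Remark \ref{separate}) and $F_\lambda$ preserves heights, for $h$ large $\mathcal{N}_\lambda\cap\bigl(\mathbb{H}^2\times[-h_0,h_0]\bigr)$ is $C^1$-close to $\Pi^{-1}(\gamma_\lambda)$, projecting onto a thin one-sided band $B_\lambda$ about $\gamma_\lambda$ over which it is a bigraph whose two sheets exhaust $[-h_0,h_0]$; moreover $B_1$ meets every ray from $0$. Now $d\Pi(X)=x\,\partial_x+y\,\partial_y$ is the Euclidean radial field, which along $\gamma_\lambda$ is normal to $\gamma_\lambda$ in $\mathbb{H}^2$; hence the horizontal part of $X$ is nowhere tangent to $\gamma_\lambda$, so $X$ is transverse to $\Pi^{-1}(\gamma_\lambda)$ with angle bounded below, and by $C^1$-closeness $X$ is transverse to $\mathcal{N}_1$ throughout $\mathcal{N}_1\cap(\mathbb{H}^2\times[-h_0,h_0])$. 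This is (i).

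For (ii), write $\Sigma_1:=\mathcal{N}_1\cap(\mathbb{H}^2\times[-h_0,h_0])$ and $G:=\Pi^{-1}(\gamma)\cap(\mathbb{H}^2\times[-h_0,h_0])\cong\mathbb{R}\times[-h_0,h_0]$. The orthogonal projection $P\colon\Sigma_1\to G$ is a diffeomorphism: it is injective because $B_1$ is one-sided, so the two sheets over a common base point go to distinct heights of $G$ and coincide only along the fold; and it is onto because along each normal to $\gamma$ the sheets of $\mathcal{N}_1$ run continuously from the fold height out to $\pm h_0$, so they realize every value in $[-h_0,h_0]$ by the intermediate value theorem (applying the estimates of Lemma \ref{transverse} to a slightly larger slab absorbs the bounded height distortion coming from $L_s$). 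Since $d\Pi(X)$ is normal to $\gamma$, the $X$-flow projection $\pi\colon\Sigma_1\to G$ differs from $P$ by a self-map of $G$ that is uniformly $C^1$-close to the identity when $h$ is large; hence $\pi$ is a proper local diffeomorphism onto $G$ (properness holds since $\Sigma_1$ is properly embedded and unbounded only toward the ideal points $\pm s/2$, over which $G$ is likewise unbounded), and being a covering of the simply connected $G$ it is a diffeomorphism. Thus every $X$-orbit through the slab meets $\mathcal{N}_1$ in exactly one point, which is (ii), and the lemma follows. (Alternatively one can run (ii) from the strict monotonicity of the sheets $u_d^{\pm}$ of $\mathcal{M}_h(s)$ in the equidistant parameter $\theta$: since $d>1$ forces $\sqrt{1-d^2\sin^2\theta}\le|\cos\theta|$, one has $\tfrac{d\sqrt{1+4\tau^2\cos^2\theta}}{\sqrt{1-d^2\sin^2\theta}}\ge\sqrt{\sec^2\theta+4\tau^2}>2|\tau|$, whence $\tfrac{d}{d\theta}u_d^+<0<\tfrac{d}{d\theta}u_d^-$, so $\mathcal{N}_1$ is a graph over the height along each ray from $0$, a structure preserved by the radial action of $F_\lambda$; here the only delicate point is that the bounded $L_s$-distortion does not spoil monotonicity, which holds because $|\tfrac{d}{d\theta}u_d^\pm|$ is bounded below while its total variation over the relevant $\theta$-range tends to infinity with $h$.) The substantive step is therefore (ii): establishing the structural description of $\mathcal{N}_1$ inside the slab and running the covering argument — equivalently, controlling the bounded $L_s$-distortion against the monotonicity of the sheet functions.
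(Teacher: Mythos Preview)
Your proof is correct and considerably more detailed than the paper's. The paper's argument for this lemma is two sentences: it asserts that covering is ``clear'' and then observes that any two leaves are related by the isometry $F_{\lambda_2/\lambda_1}$, leaving disjointness (and hence the foliation property) implicit in the geometric picture of near-vertical surfaces over concentric semicircles. You instead realize $\{F_\lambda\}$ as the flow of the Killing field $X=x\,\partial_x+y\,\partial_y$ and reduce the lemma to showing that $\mathcal{N}_1$ is a transverse slice meeting each orbit exactly once, which you establish by a covering argument for the flow projection $\pi:\Sigma_1\to G$. This is a genuinely different route: it supplies the missing disjointness argument and is self-contained, at the cost of being much longer than what the paper records.

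Two minor comments. First, the detour through the orthogonal projection $P$ is not needed: once you have transversality (i), $\pi$ is already a local diffeomorphism, and properness follows directly from the fact that $\Sigma_1$ lies in a thin band about $\gamma$ inside the slab; the conclusion that $\pi$ is a diffeomorphism then drops out of the covering argument over the simply connected strip $G$. Second, your alternative monotonicity argument is the one that most closely matches the paper's implicit picture, but as you note the $L_s$-distortion makes it delicate; the covering argument is the cleaner of your two routes.
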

\begin{proof}
Clearly any point of $\mathbb{H}^{2}\times [-h_{0},h_{0}]$ is in some of the surfaces. Moreover given $\lambda_{1} \neq \lambda_{2}$, the map defined in coordinates by $(x,y,t) \mapsto \bigl(\frac{\lambda_2}{\lambda_1}x, \frac{\lambda_2}{\lambda_1}y, t\bigr)$ is an isometry of $\mathbb{E}(-1,\tau)$ (in the half-space model) which sends the surface $F_{\lambda_1}\circ L_{s}\bigl(\mathcal{M}_{h}(s)\bigr)$ into the surface $F_{\lambda_2}\circ L_{s}\bigl(\mathcal{M}_{h}(s)\bigr)$.
\end{proof} 

\begin{lem}\label{sub}
Consider the geodesic $\gamma_{s} = \{(s,t); t > 0\}$ of $\mathbb{H}^{2}$. Consider two entire graphs inside a slab of height $h_{0}$ that are disjoint and bounded away from the vertical. Let $\mathcal{R}$ be the region between them. Then, for $h > h_{0}$ sufficiently large, there exists $k_{0} = k_{0}(h,h_{0}) > 0$ such that $$\dist_{\mathbb{E}(-1,\tau)}\bigl(p,\Pi^{-1}(\gamma_{s})\bigr) \leq k_{0}, \forall p \in \mathcal{M}_{h}(s)\cap\mathcal{R}.$$
\end{lem}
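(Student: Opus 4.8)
The plan is to reduce the statement to a one‑variable estimate on the profile functions $u_d^{\pm}$ of $\mathcal{M}_h(s)$, after a purely metric first step. For an arbitrary $p\in\mathbb{E}(-1,\tau)$ one has
$$\dist_{\mathbb{E}(-1,\tau)}\bigl(p,\Pi^{-1}(\gamma_{s})\bigr)\le\dist_{\mathbb{H}^{2}}\bigl(\Pi(p),\gamma_{s}\bigr):$$
if $c\colon[0,D]\to\mathbb{H}^{2}$ is the minimizing geodesic from $\Pi(p)$ to its foot point $q_{0}\in\gamma_{s}$, with $D=\dist_{\mathbb{H}^{2}}(\Pi(p),\gamma_{s})$, then its horizontal lift $\tilde c$ with $\tilde c(0)=p$ has length $D$ (horizontal lifts of a Riemannian submersion are length‑preserving) and ends at $\tilde c(D)\in\Pi^{-1}(q_{0})\subset\Pi^{-1}(\gamma_{s})$. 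Hence it suffices to bound $\dist_{\mathbb{H}^{2}}(\Pi(p),\gamma_{s})$ for $p\in\mathcal{M}_h(s)\cap\mathcal{R}$.

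I would then use the explicit description of $\mathcal{M}_h(s)$ in the polar coordinates $(\phi,\theta)\in\mathbb{R}\times(0,\pi)$ centered at $(s,0)$: it is the union of the graphs of $u_d^{+}$ and of $u_d^{-}$ over $\Omega_{s,d}=\{\theta\in(0,\arcsin(1/d))\}$, where $d=d(h)$. Since the curves $\{\theta=\theta_{0}\}$, $\theta_{0}\in(0,\pi)$, are equidistant to $\gamma_{s}=\{\theta=\pi/2\}$, a point $p\in\mathcal{M}_h(s)$ with parameter $\theta$ satisfies $\dist_{\mathbb{H}^{2}}(\Pi(p),\gamma_{s})=\dist(\theta)=\int_{\theta}^{\pi/2}\frac{d\psi}{\sin\psi}$, a function that decreases in $\theta$ and blows up as $\theta\to0^{+}$. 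So the lemma reduces to finding $\theta_{1}=\theta_{1}(h,h_{0})>0$ such that $|u_d^{\pm}(\theta)|\le h_{0}$ implies $\theta\ge\theta_{1}$; then $k_{0}:=\dist(\theta_{1})$ works, because a point $p\in\mathcal{R}$ lies in the slab $\mathbb{H}^{2}\times[-h_{0},h_{0}]$ and its height is $u_d^{\pm}(\theta(p))$.

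The core estimate is that $u_d^{\pm}(\theta)$ is large in absolute value for $\theta$ small. Recall from Proposition 3 that $h(d)\to+\infty$ and $\arcsin(1/d)\to\pi/2$ as $d\to1^{+}$, so for $h$ large enough (equivalently, $d$ close enough to $1$) we may assume $d\in(1,1+\delta_{0})$ for a fixed small $\delta_{0}$, and then $\arcsin(1/d)>\pi/4$. On $\psi\in[0,\pi/4]$ the integrand $\tfrac{d\sqrt{1+4\tau^{2}\cos^{2}\psi}}{\sqrt{1-d^{2}\sin^{2}\psi}}$ is bounded by a constant $C=C(\tau)$ independent of $d$, so rewriting the formula of Proposition 3 as
$$u_d^{+}(\theta)=\bigl(h(d)-2\tau\arcsin(1/d)\bigr)-\int_{0}^{\theta}\frac{d\sqrt{1+4\tau^{2}\cos^{2}\psi}}{\sqrt{1-d^{2}\sin^{2}\psi}}\,d\psi-2\tau\theta$$
one obtains, for $\theta\le\pi/4$, the bound $u_d^{+}(\theta)\ge h(d)-\tfrac{3\pi}{2}|\tau|-C\theta$, and symmetrically $u_d^{-}(\theta)\le -h(d)+\tfrac{3\pi}{2}|\tau|+C\theta$. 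Taking $h$ large enough that $h(d)-\tfrac{3\pi}{2}|\tau|-h_{0}>0$ and setting $\theta_{1}:=\min\{\pi/4,\,(h(d)-\tfrac{3\pi}{2}|\tau|-h_{0})/C\}$, every $\theta<\theta_{1}$ gives $u_d^{+}(\theta)>h_{0}$ and $u_d^{-}(\theta)<-h_{0}$, which is the contrapositive of what is needed.

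I expect the main obstacle to be the uniformity of this estimate in $d$: one must guarantee that the blow‑up of $u_d^{\pm}$ near $\theta=0$ takes place over a $\theta$‑interval whose length is bounded below independently of $h$ — the restriction to $[0,\pi/4]$, where $\arcsin(1/d)$ stays away from the singularity of the integrand, is exactly the mechanism for this. A minor extra case is the variant of $\mathcal{M}_h(s)$ from Remark 2, parametrized over $\theta\in(\arcsin(1/d)+\pi/2,\pi)$; it is treated identically with $\theta\to\pi$ playing the role of $\theta\to0$.
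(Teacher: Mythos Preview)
Your proof is correct and follows the same overall strategy as the paper's: first bound $\dist_{\mathbb{H}^{2}}(\Pi(p),\gamma_{s})$ by showing the $\theta$-coordinate of any $p\in\mathcal{M}_{h}(s)\cap\mathcal{R}$ is bounded away from $0$, then pass to the ambient distance via a horizontal lift. The paper simply asserts the first part (``$\Omega'$ is limited by two equidistants of $\gamma$''); you supply the explicit estimate on $u_{d}^{\pm}$ that justifies it.

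Your metric step is in fact cleaner than the paper's. You lift the minimizing $\mathbb{H}^{2}$-geodesic starting at $p$, so the endpoint lies directly in $\Pi^{-1}(\gamma_{s})$ and you get $\dist_{\mathbb{E}(-1,\tau)}\bigl(p,\Pi^{-1}(\gamma_{s})\bigr)\le\dist_{\mathbb{H}^{2}}(\Pi(p),\gamma_{s})$ in one stroke. The paper instead lifts from a point $z\in\Pi^{-1}(\gamma_{s})\cap\mathcal{R}$, lands at some $\tilde q$ in the fiber over $q$, and then invokes Lemma~\ref{boundt} to bound the residual vertical distance $\dist_{\mathbb{E}(-1,\tau)}(\tilde q,q)$, obtaining $k_{0}=c+\tilde c$. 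Your route avoids that detour and the appeal to Lemma~\ref{boundt} altogether.
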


\begin{proof}
Let $q \in \mathcal{M}_{h}(s)\cap\mathcal{R}$. The portion of $\mathcal{M}_{h}(s)$ inside $\mathcal{R}$ projects on a subdomain $\Omega'$ of $\Omega$ which is limited by two equidistants of $\gamma$. So the distance in $\mathbb{H}^{2}$ between $\gamma_{s}$ and $\Omega'$ is limited by a constant $c =  c(h_0)$, hence $\dist_{\mathbb{H}^{2}}(\gamma_{s},q) \leq c$. Consider a minimizing geodesic $\alpha$ of $\mathbb{H}^{2}$ joining $\gamma_{s}$ and $\Pi(q)$ and let $\tilde{\alpha}$ be an horizontal lift of $\alpha$ through a point $z \in \bigl(\Pi^{-1}(\gamma_{s})\bigr)\cap\mathcal{R}$. Denote by $\tilde{q}$ the point where $\tilde{\alpha}$ intersects the fiber over $q$. By lemma \ref{boundt} and the fact $z$ is inside a slab it follows that the $t$-coordinate of $\tilde{\alpha}$ is bounded. Hence $\dist_{\mathbb{E}(-1,\tau)}(\tilde{q},z)$ is bounded by a constant $\tilde{c}$. Then, 
\begin{eqnarray*}
\dist_{\mathbb{E}(-1,\tau)}(\Pi^{-1}(\gamma_{s}),q) &\leq & \dist_{\mathbb{E}(-1,\tau)}(z,q) \leq \dist_{\mathbb{E}(-1,\tau)}(z,\tilde{q}) + \dist_{\mathbb{E}(-1,\tau)}(\tilde{q},q)\\
&=& L(\tilde{\alpha}) + \dist_{\mathbb{E}(-1,\tau)}(\tilde{q},q) \leq c + \tilde{c}.
\end{eqnarray*}
\end{proof}

We finish this section describing the image of the surfaces $\mathcal{M}_{h}(s)$ under the isometry between the two models. Consider the maps $\phi$ and $\Phi$, which are isometries between the two models of $\mathbb{H}^{2}$ and $\mathbb{E}(-1,\tau)$ respectively. The geodesic $\phi(\gamma_{s})$ joins the points $\bigl(\frac{2s}{s^2 + 1},\frac{s^2 - 1}{s^2 + 1}\bigr)$ and $(0,1)$ at the asymptotic boundary. This curve divides the disc in two components. The equidistant $\gamma_{s,\arcsin(1/d)}$ is sent by $\phi$ to a equidistant $\beta_{s,d}$ of $\phi(\gamma_{s})$. Let $\mu_{s}$ be the arc of the asymptotic boundary joining the end points of $\phi(\gamma_{s})$ which is in the same component of $\beta_{s,d}$ and let $\Omega_{d}(s)$ be the domain limited by $\mu_{s}$ and $\beta_{s,d}$.

\begin{figure}[!ht]
\centering
\includegraphics[scale=0.7]{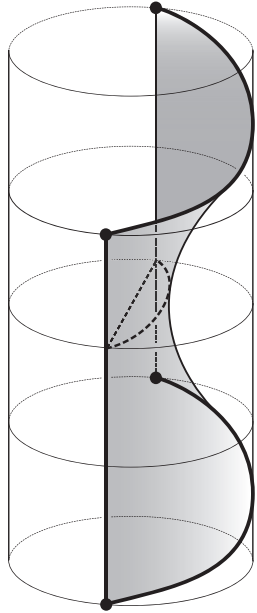}
\caption{The surface $\Phi\bigl(\mathcal{M}_{h}(s)\bigr)$}
\label{fig7}
\end{figure}

\begin{prop}
The image of $\mathcal{M}_{h}(s)$ under $\Phi$ is a minimal surface which projects on $\Omega_{d}(s)$. The asymptotic boundary is the union of 
\begin{eqnarray*}
\Biggl\{\Biggl(u,v,t + 4\tau\arctan\biggl(\frac{2u}{u^2 + (v - 1)^2 }\biggr)\Biggr); (u,v) \in \mu, \\
t = h(d) - 2\tau\arcsin (1/d) \ \textrm{or} \ t = h(d) + 2\tau\arcsin (1/d)\Biggr\}
\end{eqnarray*}
and the vertical segments joining the end points of these arcs.
\end{prop}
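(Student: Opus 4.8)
The plan is to read everything off the explicit formula for $\Phi$ in Lemma~\ref{isometry}, $\Phi(x,y,t)=\bigl(\phi(x,y),\,w(x,y,t)\bigr)$ with $w(x,y,t)=t+4\tau\arctan\bigl(\tfrac{x}{y+1}\bigr)$, together with the description of $\mathcal{M}_h(s)$ and of its asymptotic boundary in Proposition~3. Since $\Phi$ is an isometry of $\mathbb{E}(-1,\tau)$ (identifying the two models), it carries minimal surfaces to minimal surfaces, so $\Phi\bigl(\mathcal{M}_h(s)\bigr)$ is minimal and there is nothing more to check for that assertion.

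For the projection, the fact that the first two coordinates of $\Phi$ depend only on $(x,y)$ gives $\Pi_{\mathbb D}\circ\Phi=\phi\circ\Pi_{\mathbb H}$, where $\Pi_{\mathbb H}$ and $\Pi_{\mathbb D}$ are the Riemannian submersions of the half-space and cylinder models and $\phi$ is the isometry between the two models of $\mathbb{H}^2$. By Proposition~3 and Remark~2, $\mathcal{M}_h(s)$ projects onto the domain $\Omega_{s,d}$ bounded by the equidistant $\gamma_{s,\arcsin(1/d)}$ and the ideal arc $\{(x,0);\,x>s\}$, hence $\Phi\bigl(\mathcal{M}_h(s)\bigr)$ projects onto $\phi(\Omega_{s,d})$. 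By the definitions given just before the statement, $\beta_{s,d}=\phi(\gamma_{s,\arcsin(1/d)})$, $\mu_s=\phi(\{(x,0);\,x>s\})$, and $\Omega_d(s)$ is the component of $\mathbb D\setminus\beta_{s,d}$ with ideal boundary $\mu_s$; since $\phi$ is a homeomorphism of the closed disc it respects the decomposition into these components, whence $\phi(\Omega_{s,d})=\Omega_d(s)$.

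For the asymptotic boundary I would first check that $\Phi$ extends continuously to the finite part $\{y=0\}$ of $\partial_\infty\mathbb H^2$ and acts there by $(x,0,t)\mapsto\bigl(\phi(x,0),\,t+4\tau\arctan x\bigr)$, because $\arctan\bigl(\tfrac{x}{y+1}\bigr)\to\arctan x$ as $y\to0$. The single computation needed is the identity that, writing $(u,v)=\phi(x,0)\in\partial_\infty\mathbb D$, one has $x=\dfrac{2u}{u^2+(v-1)^2}$, which is immediate from $u^2+(v-1)^2=\tfrac{4}{x^2+1}$ and $2u=\tfrac{4x}{x^2+1}$ on $\{y=0\}$. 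Hence $4\tau\arctan x=4\tau\arctan\bigl(\tfrac{2u}{u^2+(v-1)^2}\bigr)$, and applying $\Phi$ to the asymptotic boundary of $\mathcal{M}_h(s)$ from Proposition~3 — the two horizontal arcs over $\{x\ge s\}$ at heights $\pm\bigl(h(d)-2\tau\arcsin(1/d)\bigr)$ and the two vertical segments joining their endpoints — produces the set in the statement. The vertical segments go to vertical segments because $\Phi$ is fibre-preserving and acts on each fibre by a translation depending only on the base point.

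The main obstacle — essentially the only nontrivial step — is the bookkeeping in this last part: matching the two limiting heights of $\partial_\infty\mathcal{M}_h(s)$ with the two values of $t$ in the statement, making sure the branch of $\arctan$ implicit in $w$ is the one producing the displayed formula, and treating the ideal endpoint $x\to+\infty$, i.e.\ $(u,v)\to(0,1)$, where $\arctan x\to\pi/2$ and the displayed expression $\tfrac{2u}{u^2+(v-1)^2}$ degenerates, so that the endpoints of the vertical end segments have to be identified by a limit along $\mu_s$. Everything else is the isometry invariance of minimality and the two elementary identities above.
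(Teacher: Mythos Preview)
Your proposal is correct and is exactly the approach the paper has in mind: the paper's own proof is the single sentence ``Follows by direct calculations,'' and what you have written is precisely that computation spelled out --- isometry invariance of minimality, the commutation $\Pi_{\mathbb D}\circ\Phi=\phi\circ\Pi_{\mathbb H}$ for the projection, and the identity $x=\dfrac{2u}{u^2+(v-1)^2}$ on $\{y=0\}$ for the asymptotic boundary. Your caveat about the bookkeeping of the two limiting heights is well taken (indeed, comparing with the values $t=\pm\bigl(h(d)-2\tau\arcsin(1/d)\bigr)$ from Proposition~3 suggests a sign slip in the displayed $t$-values of the statement), but this does not affect the method.
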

\begin{proof}
Follows by direct calculations.
\end{proof}

\section{The Slab Theorem}

\begin{defin}\label{slab}
\noindent
Consider the half-space model or the cylinder model of $\mathbb{E}(-1,\tau)$ (see section 2 and remark \ref{rem} in the introduction). We say that a region $\mathcal{R}$ of $\mathbb{E}(-1,\tau)$ is a generalized slab if the following conditions are satisfied: 
\begin{enumerate}
\item $\mathcal{R}$ is a domain bounded by two disjoint entire vertical graphs $S_1$ and $S_2$ with bounded height (in relation to $\mathbb{H}^{2}\times\{0\}$) and such that the tangent planes of $S_1$ and $S_2$ are bounded away from the vertical, i.e, $\langle N_{i},\xi\rangle \geq c > 0$, where $N_{i}$ is the unit normal of $S_{i}$ and $c$ is a constant;
\item There is a $C^{1}$ map $\Psi:\mathcal{R}\times\bigl(\mathbb{S}^{1}\times [-1,1]\bigr) \to \mathbb{E}(-1,\tau)$, such that, for each $p \in \mathcal{R}$ we have that $\mathcal{C}(p) := \Psi\bigl(p,\mathbb{S}^{1}\times [-1,1]\bigr)$ is a minimal annulus containing the point $p$ and its two boundary curves are disjoint from $\mathcal{R}$, one above $\mathcal{R}$ and one below $\mathcal{R}$. Moreover, any two annuli of the family $\{\mathcal{C}(p); p \in \mathcal{R}\}$ are isometric to each other.
\end{enumerate}
\end{defin}

The second condition on the previous definition is motivated by the following result due to Collin, Hauswirth and Rosenberg, which plays an essential role on the proof of the main theorem of this paper:
\begin{mainlem}\cite{C.H.R1,C.H.R2}\label{drag.lem}
Let $\Sigma$ be a properly immersed minimal surface in a complete Riemannian $3$-manifold $M$. Let $S$ be a compact surface with boundary and $f: S\times[0,1] \rightarrow M$ a $C^{1}$ map such that for each $0 \leq t \leq 1$, $S(t) := f(S\times\{t\})$ is a minimal surface of $M$. Suppose that:
\begin{description}
\item[a)] $\partial\bigl(S(t)\bigr)\cap \Sigma = \emptyset$ for $0 \leq t \leq 1$;
\item[b)] $S(0)\cap \Sigma \neq \emptyset$.
\end{description}
Then there is a continuous path $\gamma: [0,1] \rightarrow \Sigma$, $C^{1}$ almost everywhere, such that $\gamma(t) \in S(t)\cap \Sigma$, for $0 \leq t \leq 1$. Moreover one can prescribe any initial value to $\gamma(0) \in S(0)\cap \Sigma$.
\end{mainlem}

We just mention that the author in joint work with Ivan Passoni, proved a version of the {\it Dragging Lemma} concerning surfaces with constant mean curvature different from zero, \cite{L.P}.

We can now stablish our main result. The proof follows the ideas of \cite{C.H.R2}.
\begin{mainteo}
Let $\Sigma$ be a properly immersed minimal surface of finite topology in $\mathbb{E}(-1,\tau)$, $\tau \in \mathbb{R}$, inside of a generalized slab $\mathcal{R}$. Then each of its ends is a multi-graph. Moreover:
\begin{itemize}
\item[a)] If $\Sigma$ is embedded, then each of its ends contains an annulus that is a graph over the complement of a disc in $\mathbb{H}^{2}$;
\item[b)] If $\Sigma$ is embedded and has only one end, then it is an entire graph.
\end{itemize}
\end{mainteo}

\begin{proof}
Consider the half-space model. Suppose $\Sigma$ is a properly immersed minimal surface inside $\mathcal{R}$, which is homeomorphic to $\mathbb{S}^{1}\times\mathbb{R}^{+}$.
Choose a metric ball $B$ of $\mathbb{E}(-1,\tau)$, with center some $p_{0}$ in the interior of $\mathcal{R}$, and radius sufficiently large such that there are points of $\Sigma$ in $B$, $\partial\Sigma \subset B$ (in the case $\partial\Sigma \neq \emptyset$) and $\mathcal{C}(p_0) \subset B$, where $\mathcal{C}(p_0) := \Psi\bigl(p_{0},\mathbb{S}^{1}\times [-1,1]\bigr)$ is the annulus in the definition of a slab region. Since $\Sigma$ is properly immersed, the set $B\cap\Sigma$ has a finite number of compact connected components and there exists a compact set $\mathcal{K} \subset \mathbb{E}(-1,\tau)$, $B \subset \mathcal{K}$, such that any two points of $B\cap\Sigma$ can be joined by a path of $\Sigma$ in $\mathcal{K}$, see figure \ref{fig8}.

\begin{figure}[!ht]
\includegraphics[scale=0.6]{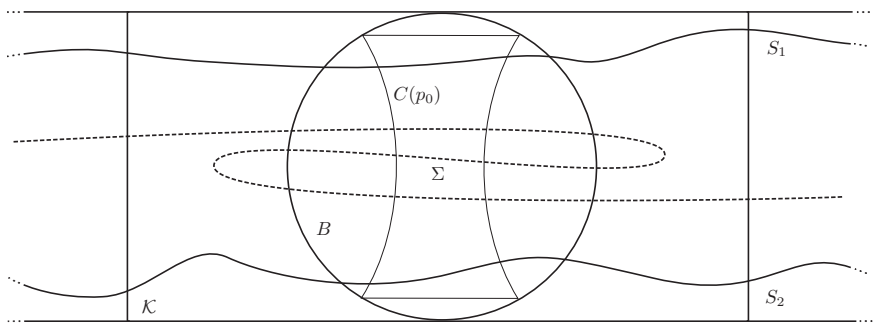}
\caption{The sets $B$ and $\mathcal{K}$}
\label{fig8}
\end{figure}

We will prove that if $p \in \Sigma$ is far enough from $\mathcal{K}$ the tangent plane $T_{p}\Sigma$ can not be vertical, hence $\Sigma$ is a multi-graph far from $\mathcal{K}$. This involves two steps:
\item[\bf Step 1]: If $T_{p}\Sigma$ is vertical, for $p \in \Sigma\backslash\mathcal{K}$, we can find a minimal surface $M$ such that $T_{p}M = T_{p}\Sigma$ and $M\cap\mathcal{K} \neq \emptyset$;
\item[\bf Step 2]: $\exists c > 0$ such that if $\dist(p,\mathcal{K}) > c$, then $M\cap\mathcal{K} = \emptyset$.
\\\\
{\bf Proof of step 1}:
\\

Suppose that $p \in \Sigma\backslash\mathcal{K}$ has a vertical tangent plane. The region $\mathcal{R}$ is inside a slab $\{-h_{0} < t < h_{0}\}$. Fix $h > h_{0}$ large enough such that the lemma \ref{transverse} is true. Applying an ambient isometry if necessary we can find a surface $M = L_{s}\bigl(\mathcal{M}_{h}(s)\bigr)$ which is tangent to $\Sigma$ at $p$ (see section 2). We want to prove that $M$ intersects $\mathcal{K}$. Assume the contrary. By remark \ref{separate}, the condition $\langle N_{i},S_{i}\rangle > c > 0$, of the boundary graphs of $\mathcal{R}$ guarantees that $M$ separates $\mathcal{R}$ in two connected components $M(-)$ and $M(+)$. We can assume $\mathcal{K} \subset M(+)$. There exists a neighbourhood $\mathcal{U}$ of $p$ in $\Sigma$ such that $\mathcal{U}\cap M$ consists of $2n$ arcs through $p$, $n > 1$, meeting at equal angles $\pi/n$ at $p$. Let $\sigma_{1}$ and $\sigma_{2}$ be distinct connected components of $\mathcal{U}\backslash M$, contained in $M(+)$.
\\\\
\textbf{Claim 1:}
$\sigma_{1}$ and $\sigma_{2}$ are contained in distinct components of $\Sigma\cap M(+)$. 

\begin{proof}
Suppose $\sigma_{1}$ and $\sigma_{2}$ are contained in the same connected component of $\Sigma\cap M(+)$. Then there is a path $\bar{\gamma}$ in $\Sigma\cap M(+)$, joining a point $q_{1} \in \sigma_{1}$ to a point $q_{2} \in \sigma_{2}$. We can also join $q_{1}$ to $q_{2}$ by a path $\tilde{\gamma}$ in $\mathcal{U}$ through $p$, with $\tilde{\gamma}\backslash\{p\} \subset M(+)$. Let $\Gamma := \bar{\gamma}\cup\tilde{\gamma} \subset \overline{M(+)}$. There are two possibilities: $\Gamma$ bounds a compact disc in $\Sigma$, or $\Gamma\cup\partial{\Sigma}$ bounds an immersed compact annulus. In both cases we denote by $D$ the surface that $\Gamma$ (or $\Gamma\cup\partial\Sigma$) bounds in $\Sigma$.

\begin{figure}[!ht]
\centering
\includegraphics[scale=0.7]{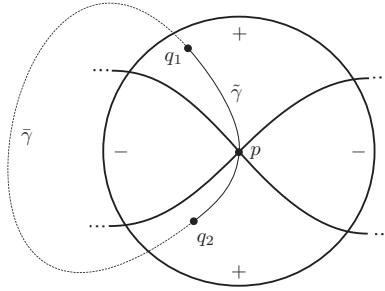}
\caption{The loop $\Gamma$}
\label{fig9}
\end{figure}

Consider the isometries $F_{\lambda}(x,y,t) = \bigl(\lambda x,\lambda y,t\bigr)$, where $\lambda > 0$. By lemma \ref{foliation}, the surfaces $M_{\lambda} := F_{\lambda}(M)$ foliates the slab $\{-h_{0} < t < h_{0}\}$. By the construction of $\Gamma$, $D$ contains points in $M(-)$ and $\partial D \subset M(+)$. Since $D$ is compact there is a $\lambda_{0}$ such that $M_{\lambda_{0}}$ and $D$ intersect and are tangent at a single point, but this contradicts the maximum principle since these two surfaces are minimal. Thus $\sigma_{1}$ and $\sigma_{2}$ are contained in distinct connected components $\Sigma_{1}$ and $\Sigma_{2}$ of $\Sigma\cap M(+)$. 
\end{proof}
\vspace{0.5cm}

Choose $\lambda > 0$ small, so that $M_{\lambda}$ intersects both $\sigma_{1}$ and $\sigma_{2}$, at points $x_{1} \in \Sigma_{1}$ and $x_{2} \in \Sigma_{2}$.
\\\\
\textbf{Claim 2:} The components $S_{x_{i}} := \Sigma_{i}\cap M_{\lambda}$ that contains $x_{i}$, $i = 1,2$, are non compact.

\begin{proof}
Suppose first that both components are compact. If $S_{x_{i}}$ bounds a disc, using the same idea of the previous claim we obtain a contradiction with the maximum principle. Suppose $S_{x_{1}}\cup S_{x_{2}}$ bounds a compact annulus $A$ in $\Sigma$ disjoint from $\partial{\Sigma}$, since $\partial A \subset M_{s}$, there is a $M_{t}$ tangent to $A$ at a single point and we obtain a contradiction. Hence both can not be compact.

Finally, suppose $S_{x_{1}}$ is not compact and $S_{x_{2}}$ is compact. Since $S_{x_{2}}$ is not null homotopic in $\Sigma$, $S_{x_{2}}\cup\partial\Sigma$ bound an immersed annulus $A$ and $\partial A \subset M_{s}(+)$, then again by the maximum principle $A \subset  M_{s}(+)$. The projection  is a Riemannian submersion, so
$$\dist_{\mathbb{E}(-1,\tau)}(p,q) \geq \dist_{\mathbb{H}^{2}}\bigl(\Pi (p),\Pi (q)\bigr), \forall p,q \in \mathbb{E}(-1,\tau).$$
Thus the distance between $M$ and $M_{\lambda}$ diverges at infinity, since this is true for their projections. Moreover $\Sigma$ is proper, so there are points $z \in S_{x_{1}}$ arbitrarily far from $M$. Choose such a $z$ so that the annulus $\mathcal{C}(z)$ through $z$ (see definition \ref{slab}) is contained in $M(+)$, and let $\eta:[0,l] \rightarrow \mathcal{R}\cap M(+)$ be a curve joining $z$ to $p_{0}$. Then, by the properties of $\mathcal{R}$ we can construct a family $f(t)$ of compact minimal annuli along $\eta$ satisfying, $f(0) = \mathcal{C}(z)$, $f(l) \subset B$ and $\partial\bigl(f(t)\bigr)\cap\mathcal{R} = \emptyset$, with one boundary circle above $\mathcal{R}$ and another below $\mathcal{R}$. Applying the Dragging Lemma to the family $f$ we obtain a path in $\Sigma\cap M(+)$ joining $z$ to a point $q \in \Sigma\cap B$. Join $q$ to a point of $\partial\Sigma$ by a path in $\Sigma\cap \mathcal{K}$. Let $\beta$ be the union of these paths. Since by hypothesis $\mathcal{K} \subset M(+)$, we have $\beta \subset \Sigma\cap M(+)$. But this contradicts the fact $S_{x_{1}}$ and $S_{x_{2}}$ are in distinct components of $\Sigma\cap M(+)$. This concludes the proof.
\end{proof}

\begin{figure}[!ht]
\centering
\includegraphics[scale=0.7]{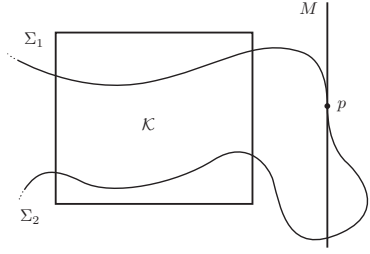}
\label{fig10}
\caption{Two noncompact components of $\Sigma$ in $M(+)$}
\end{figure}

Continuing the proof that M must intersect $\mathcal{K}$, we have two non compact components $S_{x_{i}} := \Sigma_{i}\cap M_{s}$, $i = 1,2$, with $\Sigma_{1}$ and $\Sigma_{2}$ in distinct components of $\Sigma\cap M(+)$. As above there exists points $z_{i} \in S_{x_{i}}$ far enough away from $M$ so that the annuli $\mathcal{C}(z_1)$ and $\mathcal{C}(z_2)$, through $z_1$ and $z_2$ respectively, are inside $M(+)$. Let $\eta_{i}$ be a curve in $\mathcal{R}$ joining $z_{i}$ to the point $p_{0}$ and consider families $f_{i}(t)$ of compact minimal annuli as before. Apply the Dragging Lemma to $f_{i}(t)$, to find a path in $\Sigma_{i}$ from $z_{i}$ to a point $q_{i} \in \Sigma\cap B$, $i = 1, 2$, and join $q_{1}$ to $q_{2}$ by a path in $\Sigma\cap \mathcal{K}$. This contradicts the fact that $z_{1}$ and $z_{2}$ are in distinct components of $\Sigma\cap M(+)$. Thus $M\cap \mathcal{K}$ is not empty.\\\\

\noindent
{\bf Proof of step 2}:
\\

Let $p \in \Sigma\backslash(\Sigma\cap\mathcal{K})$ be a point with vertical tangent plane. As before, applying an ambient isometry if necessary we can find a surface $\mathcal{M}_{h}(s)$ tangent to $\Sigma$ at $p$, with $h$ large as in proposition \ref{separation}. Consider the geodesic of $\mathbb{H}^{2}$ defined by $\gamma_{s}(r) = (s,r)$ which is associated to the surface $\mathcal{M}_{h}(s)$, we can also choose $s$ and $h$ such that $\Pi^{-1}(\gamma_{s})$ does not intersect the compact set $\mathcal{K}$.

Let $k_{0}$ be the constant of lemma \ref{sub}. Observe that
\begin{eqnarray*}
\dist_{\mathbb{E}(-1,\tau)}\bigl(\Pi^{-1}(\gamma_s), \mathcal{K}\bigr) &\geq & \dist_{\mathbb{E}(-1,\tau)}(\mathcal{K}, p) - \dist_{\mathbb{E}(-1,\tau)}\bigl(p,\Pi^{-1}(\gamma_s)\cap\mathcal{R}\bigr)\\ 
&\geq & \dist_{\mathbb{E}(-1,\tau)}(\mathcal{K}, p) - k_{0}.
\end{eqnarray*}

Suppose $\dist_{\mathbb{E}(-1,\tau)}(\mathcal{K}, p) \geq 3k_{0}$. It follows from the previous inequalities that $\dist_{\mathbb{E}(-1,\tau)}\bigl(\Pi^{-1}(\gamma_s), \mathcal{K}\bigr) \geq 2k_{0}$. Choose a point $q \in \mathcal{M}_{d}\cap\mathcal{K}$. Thus 
\begin{eqnarray*}
\dist_{\mathbb{E}(-1,\tau)}(\Pi^{-1}(\gamma_{s}),\mathcal{K}) \leq \dist_{\mathbb{E}(-1,\tau)}(\Pi^{-1}(\gamma_{s}),q) \leq k_{0},
\end{eqnarray*}
which is a contradiction. 

Thus, a point $p \in \Sigma$ satisfying $\dist_{\mathbb{E}(-1,\tau)}(p,\mathcal{K}) \geq 3k_{0}$ can not have a vertical tangent plane. 
\\\\
\textbf{Proof of the embedded case}:
\\

Suppose that $\Sigma$ as before is embedded. For $r > 0$, let $B_{r}(p_{0}) = \{p \in \mathbb{H}^{2}; \dist_{\mathbb{H}^{2}}(p,p_{0}) < r\}$, and define $\cyl(r) = \Pi^{-1}\bigl(\partial B_{r}(p_{0})\bigr)$. Let $r_{0} >0$ be large so that $\mathcal{E}$ is a multi-graph for $r \geq r_{0}$. Since $\Sigma$ is proper and its height is bounded, if $\Sigma \cap \cyl(r) \neq \emptyset$ for some $r \geq r_{0}$ then it is a finite union of embedded Jordan curves, each a graph over $\partial B_{r}(p_{0})$. These curves must be disjoint. Suppose the contrary and let $q$ be a point of intersection of two of these curves, then $\Sigma$ is tangent to $\cyl(r)$ at $q$, so $T_{q}\mathcal{E}$ is vertical, a contradiction.

Fix $r \geq r_{0}$ such that $\Sigma \cap \cyl(r) \neq \emptyset$ and let $\mathcal{E}_{r}$ be one connected component of the part of $\Sigma$ outside $\cyl(r)$. We will prove that $\mathcal{E}_{r}$ is a vertical graph over the complement of $B_{r}(p_{0})$. Let $N$ be the unit normal vector field of $\Sigma$ and define $\nu = \bigl\langle N,\partial_{t}\bigr\rangle$. Since $\mathcal{E}_{r}$ is a multi-graph, $\nu$ never vanishes on $\mathcal{E}_{r}$. Moreover $\nu$ is a Jacobi function on $\mathcal{E}_{r}$, thus $\mathcal{E}_{r}$ is stable and hence has bounded curvature. 

Then there is $\delta > 0$ such that, for any $p \in \mathcal{E}_{r}$ a neighbourhood of $p$ in $\mathcal{E}_{r}$ is a graph (in exponential coordinates) over the disc $D_{\delta} \subset T_{p}\Sigma$ of radius $\delta$, centered at the origin of $T_{p}\Sigma$. This graph, denoted by $G_{p}$, has bounded geometry. The number $\delta$ is independent of $p$ and the bound on the geometry of $G_{p}$ is uniform as well.

Consider the map $\Pi_{r}: \mathcal{E}_{r} \rightarrow \mathbb{H}^{2}\backslash B_{r}(p_{0})$, $\Pi_{r}(q,s) = q$. The absolute value of the Jacobian of $\Pi_{r}$ is $|\nu|$, so this map is a local diffeomorphism.
\\\\
\textbf{Claim 3:} The projection $\Pi_{r}: \mathcal{E}_{r} \rightarrow \mathbb{H}^{2}\backslash B_{r}(p_{0})$ is surjective. 

\begin{proof}Suppose $\Pi_{r}$ is not surjective. Then there exists a bounded open set $\Omega \subset \Pi_{r}(\mathcal{E}_{r})$ and $q_0 \in \partial \Omega$ such that, for some point $p \in \Pi_{r}^{-1}(\Omega)$, a neighbourhood of $p$ in $\mathcal{E}_{r}$ is a vertical graph of a function $f$ defined over $\Omega$ and this graph does not extend to a minimal graph over any neighbourhood of $q_0$.

We can suppose $\mathcal{R}$ contains $\mathbb{H}^{2}\times\{0\}$ and identify $\Omega$ with $\Omega\times\{0\}$. Let $\{q_n\} \subset \Omega$ be a sequence converging to $q_0$ and $p_{n} = \bigl(q_{n},f(q_{n})\bigr)$. Let $\Sigma_{n}'$ denote the image of $G_{p_{n}}$ under the vertical translation taking $p_{n}$ to $q_{n}$. There is a subsequence of $\{q_n\}$ (which we also denote by $\{q_n\}$) such that the tangent planes $T_{q_n}(\Sigma_{n}')$ converge to some vertical plane $P \subset T_{q_0} \bigl(\mathbb{E}(-1,\tau)\bigr)$. In fact, if this were not true, for $q_{n}$ close enough to $q_0$, the graph of bounded geometry $G_{p_n}$ would extend to a vertical graph beyond $q_0$. Hence $f$ would extend beyond $q_0$, a contradiction.

Thus, there exists a positive number $\delta_0 < \delta$ such that for $n$ big enough, a part of $\Sigma_{n}'$, denoted by $S_n$, is a graph over the disc of $P$ centered at $q_0$ with radius $\delta_0$ and this part contains the geodesic disc of $S_n$ centered at $p_n$ with radius $\delta/2$.

Using the Schauder theory, we can prove that a subsequence of $S_n$ converges in the $C^k$-topology, for any $k \in \mathbb{N}$, to a minimal surface $S$ passing through $q_0$ with vertical tangent plane $T_{q}S = P$. Let $N'$ be the limit unit normal field of $S$, by the maximum principle $\bigl\langle N', \partial_{t}\bigr\rangle$ is the null function on $S$. This means that $S$ is part of a vertical minimal cylinder over some curve $\gamma \subset \Omega$. Since $S$ is a minimal surface, $\gamma$ is a geodesic. Moreover, $S$ contains the geodesic disc of $\Pi_{r}^{-1}(\gamma)$ centered at $q_0$ with radius $\delta/2$.

Call $\tilde p \in \Pi_{r}^{-1}(p)$ the point in the same fiber as $p$ such that, $\tilde p$ lies above $p$, and the vertical distance between $\tilde p$ and $p$ is $\delta/4$. Observe that, by construction, $\tilde p$ is the limit of some sequence $\{\tilde{p_{n}}\} \subset D_{\delta}(q_n) \subset \Sigma_{n}'$. Therefore, the same arguments used above show that the geodesic disc of  $\Pi_{r}^{-1}(\gamma)$ centered at $\tilde p$ with radius $\delta/2$ is the limit of a sequence $\{\tilde{\Sigma}_{n}\}$, $\tilde{\Sigma}_{n} \subset \Sigma_{n}'$, extending in this way the part $\Sigma '$ of $\Pi_{r}^{-1}(\gamma)$. Repeating this argument, we can show that a connected part of $\Pi_{r}^{-1}(\gamma)$, which is arbitrarily high, is contained in the limit set of the sequence $\{\Sigma_{n}'\}$. So there are points of $\Sigma$ arbitrarily high. This gives a contradiction since $\Sigma$ has bounded height.
\end{proof}

This last claim proves there is $r_{1} > 0$ such that for each $r \geq r_{1}$, $\Sigma \cap \cyl(r)$ is not empty. Since $\Sigma$ is proper and is inside a slab, the map $\Pi_{r}$ is proper, as it is also a local diffeomorphism it follows that it is a smooth covering. Hence the number of components of $\Sigma \cap \cyl(r)$ is the same for each $r \geq r_{1}$. As $\mathcal{E}_{r}$ has only one component this number has to be 1. Thus $\mathcal{E}_{r}$ is an graph over the complement of $B_{r}(p_{0})$.\\

Now, suppose $\Sigma$ is a properly embedded minimal surface with finite topology and one end. Let $r \geq r_1$. Denote $\beta_{r} = \Sigma \cap \cyl(r)$ and let $\Sigma_{r}$ be the compact part of $\Sigma$ with boundary $\beta_{r}$. 
\\\\
\textbf{Claim 4:} $\Sigma_{r}$ is a graph over the closure of the ball $B_{r}(p_{0})$.

\begin{proof}
Exactly like the proof of proposition 5.2 in \cite{Y}.

\end{proof}

The claims 3 and 4 prove that $\Sigma$ is an entire graph.

\end{proof}

\section{Examples}

\subsection{Example 1}

Consider the disc model of $\mathbb{H}^{2}$. For each $z_{0} \in \mathbb{H}^{2}$, the map defined by $f_{z_0}(z) = \displaystyle\frac{z - z_{0}}{\bar{z_{0}}z - 1}$ is an positive isometry that sends $(0,0)$ to $z_{0}$. Consider an isometry of $\mathbb{E}(-1,\tau)$ of the form $$F_{z_0}(z,t) = \bigl(f_{z_0}(z), t - 2\tau\arg f'_{z_0}(z) + 2\tau\pi\bigr).$$

Denote $z_0 = x_0 + iy_0$, $z = x + iy$, then $$f'_{z_0}(z) = \frac{|z_0|^2 - 1}{(\bar z_{0}z - 1)^2} = \frac{(|z_0|^2 - 1)(z_{0}\bar{z} - 1)^2}{|\bar z_{0}z - 1|^2},$$
so we can define the following angle function
$$\arg f'_{z_0}(x + iy) = \pi + \arctan\Biggl[\frac{2(x_{0}x + y_{0}y - 1)(xy_{0} - x_{0}y)}{(x_{0}x + y_{0}y - 1)^2 - (xy_{0} - x_{0}y)^{2}}\Biggr],$$
hence $$|-2\tau\arg f'_{z_0}(z) + 2\tau\pi| < |\tau|\pi.$$

Note that $F_{z_0}(0,0) = (z_0,0)$, $\forall z_{0} \in \mathbb{H}^{2}$. Given $0 < \epsilon <\pi\sqrt{1 + 4\tau^2} - 2|\tau|\pi$, choose a rotational catenoid $\mathcal{C}_{d_{\epsilon}}$ (described on subsection \ref{cat}) with height satisfying $\mathcal{H}_{d_{\epsilon}} > \displaystyle\frac{\pi}{2}\sqrt{1 + 4\tau^2} - \epsilon$, and consider the family $\mathcal{F} = \bigl\{F_{z_0}\bigl(\mathcal{C}_{d_{\epsilon}}\bigr); z_0 \in \mathbb{H}^{2}\bigr\}$. We have $$\frac{\pi}{2}\sqrt{1 + 4\tau^2} - |\tau|\pi - \frac{\epsilon}{2} < \mathcal{H}_{d_{\epsilon}} - |\tau|\pi < \mathcal{H}_{d_{\epsilon}} - 2\tau\arg f'_{z_0}(z) + 2\tau\pi.$$

Therefore the slab of height $\pi\sqrt{1 + 4\tau^2} - |\tau|\pi - \epsilon$ is a region satisfying the conditions of our theorem.

\subsection{Example 2}

Fix $\tau \in \mathbb{R}$. The preliminary calculations made here does not depend on the particular model, however at the end of the subsection we specify the model to obtain  specific examples. Let $\Sigma \subset \mathbb{E}(-1,\tau)$ be an entire graph of some function $u$, such that $\langle N_{\Sigma},\partial_{t}\rangle \geq c > 0$, for some constant $c$. We define the variation of $u$ in $\Omega \subset \mathbb{H}^{2}$ as $$V_{u}(\Omega) := \sup\{|u(q_1) - u(q_2)|; q_1, q_2 \in \Omega\}.$$ 

Let $B_{r}(p)$ the geodesic ball of $\mathbb{H}^{2}$, with center $p$ radius $r$. Let $\Gamma(p) = \bigl\{\bigl(q,u(q)\bigr); q \in \partial B_{r}(p)\bigr\}$. Given $h > 0$, consider the vertical translates of $\Gamma$ by $h$ and $-h$ and denote these curves by $\Gamma_{h}(p)$ and $\Gamma_{-h}(p)$ respectively. 

Since $\Gamma(p)$ is a graph over $\partial B_{r}(p)$ and this curve is convex, using the same arguments of claim 4 in the proof of the theorem, we can prove that the solution of the plateau problem for $\Gamma_{h}(p)$ (also for $\Gamma_{-h}(p)$) is unique and is a graph of a function $v^{+}$ over $B_{r}(p)$ (respectively $v^{-}$), denote this solution by $D_{h}$ (respectively $D_{-h}$). The Douglas criterion says that if there is an annulus with boundary $\Gamma_{h}(p)$ and $\Gamma_{-h}(p)$ whose area is less than $|D_{h}| + |D_{-h}|$ then, there exists a least area minimal annulus whose boundary circles are these two curves.

Let $C_{h}$ be the cylinder with boundary $\Gamma_{h}(p)$ and $\Gamma_{-h}(p)$, i.e, the surface parametrized by $\Psi(s,t) = \bigl(x(s), y(s), z(s) + t \bigr)$, where $\bigl(x(s), y(s), z(s)\bigr)$ is a parametrization of $\Gamma$ and $-h \leq t \leq h$. Let $a$ and $b$ defined as in subsection \ref{cmcgraph}. We have that
\begin{eqnarray*}
|D_{\pm h}| &=& \int_{B_{r}(p)} \sqrt{1 + a^2 + b^2} \ d\mu \geq |B_{r}(p)| = 2\pi(\cosh r - 1);\\\\
|C_{h}|&=& 2|\Pi(\partial B_{r}(p))|h = 4\pi h\sinh r.
\end{eqnarray*}

Thus if 
\begin{equation}\label{height}
h < \frac{\cosh r - 1}{\sinh r},
\end{equation}
we have $|C_{h}| < |D_{h}| + |D_{-h}|$.

Choose a function such that $V_{u}(B_{r}(p)) < \displaystyle\frac{\cosh r - 1}{\sinh r}, \forall p \in \mathbb{H}^{2}$, and such that $\Sigma$ has bounded height. Consider $h > V_{u}(B_{r}(p))$ satisfying inequality \eqref{height}.

Now, consider the half-space model. Applying a vertical translation if necessary, we can suppose $u(i) = 0$. For each $x_{0} + y_{0}i \in \mathbb{H}^{2}$, the map defined by $G_{x_{0},y_{0}}(z,t) = \bigl(y_{0}z + x_{0}, t + u(x_{0},y_{0})\bigr)$ is an isometry that sends $(i,0)$ to $\bigl(x_{0} + y_{0}i,u(x_{0},y_{0})\bigr)$. Let $A_{h}$ a solution to the Douglas problem with boundary $\Gamma_{h}(i)$ and $\Gamma_{-h}(i)$. Then $\bigl\{G_{x_{0},y_{0}}(A_{h}); x_{0} + y_{0}i \in \mathbb{H}^{2}\bigr\}$ is a continuous family of compact minimal annuli. Moreover, by the last paragraph all the annuli in this family have boundary circles disjoint from $\Sigma$, one above $\Sigma$ and the other one below. Take two vertical translates of $\phi(\Sigma)$ such that this last condition is still satisfied for them. Now apply the isometry $\Phi$ between the two models, then we obtain a generalized slab in $\mathbb{E}(-1,\tau)$.
\\

Now we specialize in the case of $\mathbb{H}^{2}\times\mathbb{R}$. Consider an function satisfying 
\begin{equation}\label{grad}
|\nabla u(q)| \leq C <  \displaystyle\frac{\cosh r - 1}{2r\sinh r}, \ \forall q \in \mathbb{H}^{2}.
\end{equation}
So, the variation of $u$ in $B_{r}(p)$ is bounded by $2Cr$ and there is $h$ satisfying
$$2Cr < h < \displaystyle\frac{\cosh r - 1}{\sinh r}.$$

For example, the function $u: \{(x, y) \in \mathbb{R}^{2}; y > 0\} \rightarrow \mathbb{R}$, defined by $u(x,y) = \int_{0}^{y}\frac{\sin t}{t}dt$, is bounded and satisfies \eqref{grad}.

Now, in the cylinder model of $\mathbb{H}^{2}\times\mathbb{R}$, consider the function $v(x,y) = \alpha x + \beta y$. We have $$|\nabla v(x,y)| = (1 - x^{2} - y^{2})\sqrt{\alpha^{2} + \beta^{2}} \leq \sqrt{\alpha^{2} + \beta^{2}}.$$
For obtain the family of annuli we apply the isometries defined by $F_{z_{0}}(z,t) = \biggl(\displaystyle\frac{z - z_{0}}{\bar{z_{0}}z - 1},t + v(z_{0})\biggr), \ z_{0} \in \mathbb{H}^{2}$, to a solution of the Douglas problem with boundary $\Gamma_{h}(0)$ and $\Gamma_{-h}(0)$. The generalized slab is obtained taking appropriated vertical translates of the graph of $v$. If $\alpha = \frac{\pi}{2} + \epsilon$ (where $\epsilon > 0$) and $\beta = 0$, the region between the two translates is not contained in any horizontal slab of height $\pi$, so this region is not in the conditions of the slab theorem of Collin, Hauswirth and Rosenberg.

\noindent \textsc{Instituto de Matem\'atica e Estat\'istica, UERJ\\ Rua S\~ao Francisco Xavier, 524\\Pavilh\~ao Reitor Jo\~ao Lyra Filho, 6º andar - Bloco B\\ 20550-900, Rio de Janeiro-RJ, Brazil}
\\\\
\noindent {\it Email address}: \texttt{vanderson@ime.uerj.br}

\end{document}